\title{Klein--Maskit combination theorem for Anosov subgroups:\\ Free products}
\author{Subhadip Dey \and Michael Kapovich}
\theoremstyle{thmstyleone}
\newtheorem{theorem}{Theorem}[section]
\newtheorem{proposition}[theorem]{Proposition}
\newtheorem{lemma}[theorem]{Lemma}
\newtheorem{corollary}[theorem]{Corollary}
\newtheorem{maintheorem}{Theorem}
\theoremstyle{remark}
\newtheorem{remark}[theorem]{Remark}
\theoremstyle{definition}
\newtheorem{definition}[theorem]{Definition}
\newtheorem{case}{Case}
\newtheorem{example}[theorem]{Example}
\def\H{\mathbb{H}}
\def\N{\mathbf{N}}
\def\R{\mathbf{R}}
\def\Z{\mathbf{Z}}
\def\acts{\curvearrowright}
\def\ra{\rightarrow}
\def\Fu{{\rm Fu}}
\DeclareMathOperator{\PSL}{PSL}
\def\emod{{\eta_{\rm mod}}}
\def\Ft{{\rm Flag}({\tau_{\rm mod}})}
\def\Fs{{\rm Flag}({\sigma_{\rm mod}})}
\def\Pt{P_{\tau_{\rm mod}}}
\def\e{\varepsilon}
\def\G{\Gamma}
\def\g{\gamma}
\def\Lt{{\Lambda_{\tau_{\rm mod}}}}
\def\raf{\xrightarrow{\rm flag}}
\def\raC{\xrightarrow{\rm Cay}}
\def\smod{{\sigma_{\rm mod}}}
\def\tits{\partial_{\rm Tits}}
\def\tmod{{\tau_{\rm mod}}}
\def\vb{\partial_\infty}
\def\vi{\partial_{\rm I}\Gamma}
\def\vii{\partial_{\rm II}\Gamma}
\def\<{\langle}
\def\>{\rangle}
\DeclareMathOperator{\Int}{int}
\DeclareMathOperator{\isom}{Isom}
\DeclareMathOperator{\rell}{rl}
\DeclareMathOperator{\St}{St_{Fu}}
\DeclareMathOperator{\geo}{\partial_{\infty}}
\DeclareMathOperator{\id}{id}
\DeclarePairedDelimiterX{\inp}[2]{\langle}{\rangle}{#1, #2}
\begin{document}

\maketitle

\begin{abstract}
  We prove a generalization of the classical Klein--Maskit combination theorem, in the free product case, in the setting of Anosov subgroups.
 Namely, if $\G_A$ and $\G_B$ are Anosov subgroups of a semisimple Lie group $G$ of noncompact type, then under suitable topological assumptions, the group generated by $\G_A$ and $\G_B$ in $G$ is again Anosov, and is naturally isomorphic to the  free product $\G_A*\G_B$. 
 Such a generalization was conjectured in \cite{MR4002289}.
\end{abstract}

The classical Klein--Maskit combination theorem for Kleinian groups (discrete isometry groups of the hyperbolic space $\H^n$) establishes 
sufficient conditions for a subgroup $\G< G=\isom(\H^n)$ generated by two discrete subgroups $\Gamma_A, \Gamma_B $ of $G$ to be discrete and isomorphic to the free product $\Gamma_A* \Gamma_B $, see \cite{maskit1965klein,maskit1968klein,maskit1971klein,maskit1993klein} as well as Maskit's book \cite[Theorem C.2, p. 149]{maskit:book} 
and papers by Ivascu, \cite{Ivascu}, Li, Ohshika and Wang, \cite{LOW}. In this situation, the group $\G$ is said to be obtained via {\em Klein combination} of the subgroups $\Gamma_A, \Gamma_B$. Moreover, Maskit gave sufficient conditions for the Maskit combination of two geometrically finite subgroups of $\G$ to be again geometrically finite and described the limit set of $\G$ in terms of those of $\G_A, \G_B$. Maskit's condition was formulated in terms of {\em fundamental domains} of group actions on the boundary sphere $S^{n-1}$ of $\H^n$. Further generalizations of the Klein--Maskit combination theorem in the context of group actions on Gromov-hyperbolic spaces appear in \cite{baker2008combination, GITIK199965, pedroza2008combination, pedroza2009, MR2994828}.

In the last decade, {\em Anosov subgroups} of higher rank Lie groups have emerged as a higher-rank generalization of {\em convex cocompact} Kleinian groups. The goal of this article is to prove an analog of Maskit's theorem in the setting of {\em Anosov subgroups} of semisimple Lie groups $G$.\footnote{We note that Maskit also proved some far-reaching generalizations of the above combination theorem, dealing with the cases of amalgamated free products and HNN extensions. Such a generalization is not an objective of the present paper but is discussed in our subsequent work in \cite{DK23}.} 
 An earlier form of such an analogue was proven in our paper with Bernhard Leeb, \cite{MR4002289}, using the {\em local-to-global principle} for Morse quasigeodesics and extending the earlier work by Kapovich, Leeb and Porti \cite{morse}. However, our earlier work required passage to certain finite index subgroups in the given Anosov subgroups. In the paper \cite{MR4002289} we conjectured a sharp form of the combination theorem for Anosov subgroups, which exactly matches Maskit's conditions for Kleinian groups. The main goal of the present paper  is to prove this conjecture. Unlike \cite{MR4002289}, the methods of the present paper are purely dynamical, relying upon another characterization (given in the work of Kapovich, Leeb and Porti, \cite{MR3736790}) of Anosov subgroups, as {\em asymptotically embedded} subgroups. The appropriate flag-manifolds $\Ft$ (quotients of $G$ by suitable parabolic subgroups $\Pt<G$) serve as substitutes to the ideal boundary sphere $S^{n-1}$ of $\H^n$ in Maskit's setting. The subsets $A, B\subset \Ft$ appearing below act as replacements to the complements of the interiors of fundamental domains of the subgroups $\G_A, \G_B$ in $S^{n-1}$ used by Maskit. The {\em antipodality} condition in the theorem is the appropriate higher-rank analogue of the disjointness condition imposed by Maskit. The main difficulty in the proof of our main theorem is to establish that certain sequences of {\em nested images} of the sets $A, B$ under {\em alternating} sequences in the free product $\G_A * \G_B$ have singleton intersections, see  \Cref{lem:int_singleton}. (This intersection property allows us to prove both $\tmod$-regularity of $\G$ and construct a continuous equivariant embedding of the Gromov boundary of $\G$ into $\Ft$.) The intersection property would have been easy if we knew that nontrivial elements of $\G_A$, resp. $\G_B$, {\em uniformly contract} on $B$, resp. $A$. Such uniform contraction, in general, fails under the assumption of the Klein--Maskit combination theorem even in the case of $\H^n$, see Example \ref{ex1}.

 \medskip
 Let $G$ be a noncompact real semisimple Lie group with a finite center, 
 $X = G/K$ be the associated symmetric space of noncompact type, $\smod$ be a model spherical chamber (a model facet) in the Tits building $\tits X$ of $X$, $\iota : \smod \ra \smod$ be the {\em opposition involution}, and $\tmod\subset \smod$ be an $\iota$-invariant face.
 We will assume some mild conditions on $G$, 
 see \Cref{sec:basics} for more details.
 
 Our main result is:
    
\begin{maintheorem}[Combination Theorem]\label{thm:main}
 Suppose that $A, B\subset \Ft$
 are (disjoint) compact sets with nonempty interiors which are {\em antipodal} (see \Cref{def:antipodal}) to each other. 
 Let $\G_A$ and $\G_B$ be $\tmod$-Anosov subgroups\footnote{Although we do not state original definition of $\tmod$-Anosov subgroups (due to Labourie \cite{MR2221137}) in this paper, an equivalent characterization of these as $\tmod$-asymptotically embedded subgroups is discussed in \Cref{sec:discrete_groups}.
The proof of our main result relies upon this characterization.} 
of $G$ such that  
 all nontrivial elements $\alpha\in \G_A,\, \beta\in \G_B$ 
 satisfy
 $$
 \alpha(B)\subset \Int(A), \quad \beta(A)\subset \Int(B). 
 $$
 Then:
 \begin{enumerate}[label=(\roman*)]
 \itemsep0em
 \item The subgroup $\G< G$ generated by $\G_A, \G_B$ is naturally isomorphic to the abstract free product $\G_A * \G_B$. 
 
 \item The subgroup $\G< G$ is $\tmod$-Anosov. 
 
 \item The {\em $\tmod$-limit set} of $\G$ (see \Cref{sec:discrete_groups}) is contained in $A\cup B$.
 \end{enumerate}
\end{maintheorem}

This theorem has an immediate generalization to the case of several Anosov subgroups of $G$ that we will discuss in  \Cref{sec:consequences}.

To end this introduction, we would like to mention that Danciger, Guéritaud, and Kassel in \cite{DGK17} announced a combination theorem for the free products of {\em convex cocompact} subgroups of 
$G = {\rm PGL}(n,\R)$. See Proposition 12.4 in their paper for a precise statement.
Notably, by Theorem 1.15 of their paper, the class of word-hyperbolic convex cocompact subgroups of $G$ coincides with the class of {\em projective} Anosov subgroups of $G$ preserving some properly convex domain in ${\mathbb P}(\R^n)$.
Therefore, the combination theorem announced in their paper is also closely related to our main result.

 \subsubsection*{Organization of the paper} 

\smallskip\noindent
In \Cref{sec:prelim}, we prove some preliminary results needed for the proof of \Cref{thm:main}.
A crucial result proven in this section is that the {\em stars in flag-manifolds are connected}, see \Cref{sec:flag_var}, which is also of independent interest.
In \Cref{sec:discreteness} and \Cref{sec:regularity}, we show that under a suitable condition, a pair of $\tmod$-regular subgroups of $G$ generate another one; see \Cref{thm:regularity} for the precise statement.
In \Cref{sec:boundary}, we show that, under the hypothesis of \Cref{thm:main}, the subgroup $\G_A * \G_B< G$ is {\em $\tmod$-boundary embedded}.
In \Cref{sec:mainproof}, we conclude the proof of \Cref{thm:main}.
In the final section, \Cref{sec:consequences}, we discuss some consequences.

 \subsubsection*{Acknowledgement} 
 We are grateful to our anonymous referee for carefully reading this paper and making several useful comments.

\section{Preliminary notions and results}\label{sec:prelim}

The goal of this section is to set up our notation and terminology, and prove some preliminary results which will be used in the later sections.

\subsection{Reduced forms}\label{sec:reduced}
 Let $\G_A$ and $\G_B$ be two groups and let $\G = \G_A*\G_B$ be their  free product.
 We will regard $\G_A$ and $\G_B$ as subgroups of $\G$ under their natural embeddings in $\G$.
 Each element $\g\in \G$ can be written as a unique word of the form
\begin{equation}\label{eqn:reduced}
 \g = \g_k \cdots \g_1,
\end{equation}
where $k\in \N$ depends on $\g$,
 such that the following conditions are satisfied:
\begin{enumerate}[label=(\roman*)]\itemsep0em
 \item Each letter $\g_i$ in the above expression belong to either $\G_A$ or $\G_B$.
 \item No two successive letters $\g_{i+1}, \g_i$ in the above expression belong to the same group $\G_A$ or $\G_B$.
\end{enumerate}
The unique expression \eqref{eqn:reduced} for $\g$ is called the {\em reduced form} of $\g$, and the number of nontrivial letters in the right side of \eqref{eqn:reduced} is called the {\em relative (word-)length} of $\g$ and denoted  by 
$
 \rell(\g).
$

An element $\g'\in\G$ is called a {\em leftmost} (resp. {\em rightmost}) {\em subword} of $\g\in\G$ if the reduced form of $\g'$ is obtained by deleting some letters from the right (resp. left) of the reduced form of $\g$.
For example, if $\g\in\G$ has the reduced form $\g = \alpha\beta\delta$, then the elements $\alpha$, $\alpha\beta$, and $\alpha\beta \delta$ of $\G$ are leftmost subwords of $\g$ whereas the elements $\delta$, $\beta\delta$, and $\alpha\beta \delta$ are rightmost subwords of $\g$.
 
 \subsection{Hausdorff distance and convergence}\label{sec:convergence}
 
Given a compact metric space $(Z,d)$, we have the Hausdorff distance on the  set ${\mathcal C}(Z)$ of nonempty closed subsets of $Z$. This distance defines the {\em topology of Hausdorff-convergence} on ${\mathcal C}(Z)$, which coincides with the {\em Chabauty topology} on 
${\mathcal C}(Z)$. 
Under this topology, a sequence  $(A_n)$ in ${\mathcal C}(Z)$ {\em converges} to a singleton $\{z\}\in {\mathcal C}(Z)$, denoted by
\[
 A_n \to z,
\]
if and only if the diameter of $A_n\cup \{z\}$ goes to zero as $n\to \infty$.
 

\subsection{Geometric background}\label{sec:basics}

Let $G$ be a noncompact real semisimple Lie group with a finite center.
We will impose some additional assumptions on $G$ given at the end of this subsection.
Let $K<G$ be a maximal compact subgroup of $G$. 
The {\em symmetric space} of $G$ is the simply-connected space $X = G/K$, equipped with a $G$-invariant Riemannian metric.
It is a standard fact that such a Riemannian metric has {\em non-positive curvature}, and moreover, $X$ has no flat de Rham factors.
We refer to Eberlein's book, \cite{eberlein}, for a detailed discussion of symmetric spaces.

The ideal boundary of $X$ is the set of asymptotic classes of geodesic rays in $X$.
The ideal boundary carries a natural $G$-invariant {spherical building} structure, called the {\em Tits building} of $X$, and is denoted by $\tits X$: The apartments in this building are the ideal boundaries of the maximal flats in $X$. Top-dimensional simplices ({\em facets}) in $\tits X$ are called (spherical) {\em chambers} and codimension one simplices are called {\em panels}. The group $G$ acts transitively on the sets of apartments and chambers of $\tits X$. Let $F_{\rm mod} \subset X$ denote  a chosen maximal flat; the ideal boundary of this flat, denoted by $a_{\rm mod}$, is called the {\em model apartment}.
The action of $G_{a_{\rm mod}}$, the stabilizer of $a_{\rm mod}$ in $G$, on $a_{\rm mod}$ factors through a finite group $W$, called the {\em Weyl group} associated to $G$.
A chosen fundamental domain for the action $W \acts a_{\rm mod}$ is called the {\em model chamber}, and is denoted by $\smod$.
The other facets in $\tits X$ are the $G$-translates of $\smod$.

The reflection about a(ny) point $x\in F_{\rm mod}$ determines an involution ${\rm inv} : a_{\rm mod} \ra a_{\rm mod}$ which preserves the chambers of $a_{\rm mod}$.
The {\em longest element} $w_0\in W$ is the unique element which sends the chamber ${\rm inv}(\smod) \subset a_{\rm mod}$ to $\smod$.
The composition $w_0 \circ {\rm inv} : \smod \ra \smod$ is a simplicial map, called the {\em opposition involution} and denoted by $\iota$.

The stabilizer of $\smod$ in $G$ is called the {\em minimal parabolic subgroup}, denoted by $P_\smod$.
More generally, the stabilizers of faces $\emod\subset\smod$ are the {\em parabolic subgroups} of $G$, denoted by $P_\emod$.
Therefore, the space of all simplices in $\tits X$ of {\em type}\footnote{A simplex $\eta\in \tits X$ is called of {\em type} $\emod$ if there exists $g\in G$ such that $g\emod = \eta$.} $\emod$ can be identified with the {\em partial flag variety}
\[
 {\rm Flag}(\emod) \coloneqq G/P_\emod.
\]
It is easy to see that $\eta'_{\rm mod} \subset \emod$ if and only if $P_\emod < P_{\eta'_{\rm mod}}$.
The {\em full flag variety}, ${\rm Flag}(\smod) = G/P_{\smod}$, is also known as the {\em Furstenberg boundary} of $X$.

For a face $\emod\subset \smod$, it is often convenient to use the notation $\pm\emod$ to denote the pair $\emod,~\iota\emod$, respectively.
A pair of points $\eta_\pm\in {\rm Flag}(\pm\emod)$ is called {\em antipodal}
if there exists a complete geodesic line in $X$, 
which is forward (resp. backward) asymptotic to an interior point of $\eta_+$ (resp. $\eta_-$).
Equivalently, antipodal simplices in $\tits X$ are those which are swapped by some Cartan involution of $X$.
To avoid possible confusions, we remark here that pairs of antipodal points in $\tits X$ are necessarily distinct.

For $\eta\in {\rm Flag}(\emod)$, let
\begin{equation}\label{def:cell}
  C(\eta) \coloneqq \{\eta_- \in {\rm Flag}(\iota\emod) \mid \text{$\eta_-$ is antipodal to $\eta$} \}.
\end{equation}
This is an open dense cell in ${\rm Flag}(\iota\emod)$.
The complement of $C(\eta)$ in ${\rm Flag}(\iota\emod)$, denoted by $E(\eta)$, is the {\em exceptional subvariety} for $\eta$.

In the following definition, we assume that $\tmod \subset \smod$ is an $\iota$-invariant face, i.e., $\tmod = \iota\tmod$.
 
\begin{definition}[Antipodality]\label{def:antipodal}
 A subset $\Lambda\subset {\rm Flag}(\tmod)$ is called {\em antipodal} if every pair of distinct points $\lambda_\pm\in \Lambda$ is antipodal.
If $A$ and $B$ are subsets of ${\rm Flag}(\tmod)$, then they are called {\em antipodal to each other} (or $A$ is {\em antipodal} to $B$) if for all points $a\in A$, $b\in B$, $a$ and $b$ are antipodal.
\end{definition}

Throughout, we  impose the following mild assumptions on $G$:
\begin{enumerate}[label=(\roman*)]\itemsep0em
 \item The group $G$ is commensurable with the full isometry group of $X$.
 \item The Tits building of $X$ is {\em thick}, i.e., every {\em panel}  in $\tits X$ is a face of at least three different chambers. 
\end{enumerate}
These are standing assumptions in the papers \cite{MR3811766,MR3736790,KLP:Morse}  by  Kapovich, Leeb, and Porti we rely upon in this work.
Moreover, we also refer to these papers for more details on the notions introduced in this section.

\subsection{Flag varieties}\label{sec:flag_var}
Let $\emod\subset\smod$ be a face. Consider the canonical $G$-equivariant algebraic morphism
\begin{equation}\label{eqn:proj}
 \pi_\emod: \Fs \ra {\rm Flag}(\emod).
\end{equation}
The fiber of $\pi_\emod$ over any point $\eta\in {\rm Flag}(\emod)$ is called the {\em star} of $\eta$, denoted by $\St(\eta)$, and is the smooth subvariety of $\Fs$ consisting of all chambers $\sigma$ which contain $\eta$ as a face. These fibers are diffeomorphic to the quotients $P_{\emod}/P_{\smod}$. 

\begin{lemma} Let $\emod$ and $\tmod$ be any faces of $\smod$. Then: 
\label{lem:connected fibers} 
\begin{enumerate}[label=(\roman*)]\itemsep0em
 \item Fibers of the map $\pi_{\emod}$ in \eqref{eqn:proj} are connected. 
\item The projection of $\St(\eta)$ to $\Ft$ is connected. 
\end{enumerate}
\end{lemma}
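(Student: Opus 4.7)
The plan is to prove (i) by identifying the fiber $P_\emod/P_\smod$ with a real flag manifold of a smaller reductive group and then applying the Iwasawa decomposition; part (ii) will then follow for free from (i) and continuity of the projection.

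Concretely, for (i), I would start with the Langlands decomposition $P_\emod = M_\emod A_\emod N_\emod$ of the parabolic $P_\emod$. Since $P_\smod \subset P_\emod$, standard properties of parabolic subgroups give $A_\emod \subset A_\smod \subset P_\smod$ and $N_\emod \subset N_\smod \subset P_\smod$ (the split vector and unipotent radical pieces of a larger parabolic sit inside those of a smaller one). Therefore the restriction of the quotient map $P_\emod \to P_\emod/P_\smod$ to the reductive piece $M_\emod$ is already surjective, yielding
\[
P_\emod/P_\smod \;\cong\; M_\emod/(M_\emod \cap P_\smod),
\]
and $M_\emod \cap P_\smod$ is a minimal parabolic of the real reductive group $M_\emod$. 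I would then finish by showing that the compact group $M := K \cap P_\smod \subset M_\emod$ meets every connected component of $M_\emod$ --- a standard structural fact for real reductive groups, valid under the mild hypotheses on $G$ imposed in \autoref{sec:basics}. This gives $P_\emod = P_\emod^{\circ} \cdot P_\smod$, so $P_\emod/P_\smod = P_\emod^{\circ}/(P_\emod^{\circ} \cap P_\smod)$ is the continuous image of the connected Lie group $P_\emod^{\circ}$, and hence connected.

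For (ii), the projection $\pi_\tmod : G/P_\smod \to G/P_\tmod$ is well defined (because $\tmod \subset \smod$ forces $P_\smod \subset P_\tmod$) and continuous, so the image of the connected set $\St(\eta)$ is connected.

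The main obstacle I expect is verifying the component-group step $P_\emod = P_\emod^{\circ} \cdot P_\smod$ in the real setting, where $M_\emod$ may genuinely be disconnected; everything else is formal. A self-contained alternative, should the structure-theoretic input feel unsatisfying, is to use the Bruhat-type decomposition of $P_\emod/P_\smod$ into finitely many cells indexed by the Weyl group of $M_\emod$: there is a unique top-dimensional open dense cell, diffeomorphic to an affine space and hence connected, and its closure --- being all of $P_\emod/P_\smod$ --- is therefore connected as well.
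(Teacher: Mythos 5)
Your argument is correct, but it takes a genuinely different route from the paper's. You work structure-theoretically: identify the fiber $P_\emod/P_\smod$ with the full flag manifold $M_\emod/(M_\emod\cap P_\smod)$ of the Levi factor via the Langlands decomposition, and then get connectedness either from the component-group identity $P_\emod=P_\emod^{\circ}\cdot P_\smod$ or, more robustly, from the Bruhat cell decomposition (unique open cell diffeomorphic to a Euclidean space, whose closure --- being a union of cells containing every cell by the Bruhat closure order --- is the whole fiber, so the fiber is the closure of a connected set). The paper instead argues geometrically inside the Tits building and the symmetric space: two chambers of $\St(\eta)$ lie in a common apartment and are joined by a gallery through $\St(\eta)$, which reduces the claim to two chambers sharing a panel $\tau$; these are then connected by an explicit one-parameter family of chambers obtained from the rank-one symmetric-space factor $Y$ of the parallel set of a flat asymptotic to $\tau$ and its antipode, using that $\isom(Y)^{\circ}$ acts transitively on $\vb Y$. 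Your approach is shorter if one is willing to import the real Bruhat decomposition and its closure relations (and it sidesteps the disconnectedness of $M_\emod$, which you rightly flag as the only delicate point of your first variant --- note the paper's standing assumptions allow $G$ to be disconnected, so the cell-decomposition fallback is the safer of your two options); the paper's argument is self-contained within the building-theoretic framework it uses throughout and produces explicit continuous paths of chambers, i.e.\ path-connectedness. Part (ii) is handled identically in both: the image of a connected set under the continuous projection to $G/P_\tmod$ is connected.
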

\begin{proof}
  (i) Any two chambers $\sigma_1, \sigma_2\in \St(\eta)$ lie in a common apartment $a$ in the spherical building $\tits X$. Then, there exists a gallery\footnote{A {\em gallery} is a finite sequence of chambers such that every two consecutive chambers in the sequence are adjacent, i.e., share a {\em panel} (a codimension one face).} in $a$ consisting of chambers in the star of $\eta$ and connecting $\sigma_1$ and $\sigma_2$. Hence, it suffices to prove that 
$\sigma_1, \sigma_2$ lie in the same component of  $\St(\eta)$, provided that $\sigma_1, \sigma_2$ share a panel $\tau$ and lie in a common spherical apartment $a\subset \tits X$. Let $w$ denote an element of the Weyl group $W_a$ of the apartment $a$ fixing $\tau$ 
(pointwise) and swapping the chambers $\sigma_1, \sigma_2$. (The element $w$ comes from an isometry $g_w\in G$ preserving the apartment $a$.) 

Let $\hat\tau\subset a$ be the unique simplex of type $\iota\tmod$ opposite to $\tau$. Let $f\subset X$ denote a flat of dimension one less than the rank of $X$ such that $\geo f\subset a$ and $\tau, \hat\tau$ are both contained in $\geo f$. The parallel set $P(f)\subset X$ of the flat $f$  splits isometrically as the product of a rank 1 symmetric subspace $Y\subset X$ and $f$ and the apartment $a$ is contained in the ideal boundary of $P(f)$. The isometry group of $P(f)$ fixes (pointwise) the ideal boundary of $f$ and, hence, the simplex $\tau$. We connect a generic point $\xi\in \tau$ to its antipode $\hat\xi\in \hat \tau$ by a geodesic $\gamma_1$ on $a$ passing through the interior of $\sigma_1$. Then the geodesic $w(\gamma_1)=\gamma_2$ also connects $\xi, \hat\xi$ and passes through the interior of $\sigma_2$. There exists a (unique) point $\zeta_1\in \gamma_1$ which lies in the ideal boundary of $\geo Y\cap a$.  Hence, $w(\zeta_1)=\zeta_2$ is in the ideal boundary of $Y$ as well. The connected component of the isometry group of $Y$ acts transitively on the ideal boundary of $Y$ (since $Y$ has rank 1). If follows that there is a 1-parameter family $h_t, t\in [1,2]$, of isometries of $X$ preserving $Y$ and fixing $\geo f$ pointwise 
such that $h_1=\id$ and $h_2(\zeta_1)=\zeta_2$. Thus, $h_1(\gamma_1)=\gamma_2$ and $h(\sigma_1)=\sigma_2$. We obtain a 1-parameter family   $\sigma_t=h_t(\sigma_1)$, $t\in [1,2]$ of chambers in $\St(\tau)$, connecting $\sigma_1, \sigma_2$.  
Thus, $\sigma_1, \sigma_2$ are in the same component of $\St(\eta)$. 

\medskip
(ii) This follows immediately from part (i).
\end{proof}

\begin{lemma}\label{lem:nontrivial projection}
Unless the projection of $\St(\eta)$ to $\Ft$ is a singleton, it is not contained in the antipodal set of any simplex $\tau_-\in {\rm Flag}(\iota\tmod)$.  
\end{lemma}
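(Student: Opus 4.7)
The plan is to show that the projection $S := \pi_\tmod(\St(\eta))$ is a closed real algebraic subvariety of $G/P_\tmod$ which is projective (complete), while the antipodal set of $\tau_-$ is affine; the standard rigidity of regular maps from projective varieties to affine space then forces $S$ to be a single point whenever it is contained in the antipodal set.

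To identify $S$ algebraically, note that since $P_\emod$ acts transitively on $\St(\eta) = P_\emod/P_\smod$ and $\pi_\tmod$ is $G$-equivariant, $S$ is the single $P_\emod$-orbit of any point $\tau \in S$. The stabilizer of $\tau$ in $P_\emod$ is $P_\emod \cap P_\tmod$, which is itself a parabolic subgroup of $G$ (being the intersection of two parabolics). Thus $S \cong P_\emod/(P_\emod \cap P_\tmod)$ is a closed, complete real algebraic subvariety of $G/P_\tmod$. Since the complexification $S_\C$ is the orbit of the connected complex parabolic $P_{\emod,\C}$, it is irreducible, and hence $S$ is geometrically irreducible.

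Next, recall that the antipodal set of $\tau_-$ is the open Bruhat cell in $G/P_\tmod$: it is the orbit of the simplex opposite to $\tau_-$ under the unipotent radical of the parabolic opposite to $P_{\iota\tmod}$, and the orbit map identifies this set with an affine space $\R^N$ in the real algebraic category.

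If $S$ were contained in the antipodal set of $\tau_-$, the inclusion would yield a morphism $S \to \R^N$ of real algebraic varieties. Each coordinate is a global regular function on the complete geometrically irreducible variety $S$, hence constant; this follows from the familiar fact $H^0(S_\C, \mathcal{O}_{S_\C}) = \C$ upon taking Galois invariants. So the morphism is constant and $S$ is a singleton. I expect the main subtlety to lie in this final rigidity step, where one lifts the projective-to-affine rigidity from complex to real algebraic geometry using the geometric irreducibility of $S$.
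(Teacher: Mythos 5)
Your reduction of $S=\pi_\tmod(\St(\eta))$ to a single orbit $P_\eta/(P_\eta\cap P_\tau)$ is fine (the intersection is parabolic here because both subgroups contain the stabilizer of a common chamber, not because intersections of parabolics are always parabolic), and $C(\tau_-)$ is indeed real-algebraically an affine space. The gap is the final rigidity step: it is \emph{false} that a complete, geometrically irreducible real algebraic variety admits only constant regular functions, so containment of $S$ in $\R^N$ does not force $S$ to be a point. Compact real algebraic varieties are typically \emph{affine}: already $\R P^n$ embeds real-algebraically into a space of symmetric matrices via $x\mapsto xx^{T}/|x|^{2}$, and functions such as $x_0^2/(x_0^2+\cdots+x_n^2)$ are nonconstant regular functions on its real points. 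The Galois-invariance argument breaks precisely here: a regular function on $S(\R)$ is of the form $p/q$ with $q$ nonvanishing on the \emph{real} points only, so it need not extend to a regular function on $S_{\C}$; the fact $H^0(S_{\C},\mathcal{O}_{S_{\C}})=\C$ controls only the much smaller ring of functions regular on all of $S_{\C}$. In the simplest relevant case ($G=\PSL_3(\R)$, $\eta$ a line $\ell$, $\tmod=\smod$), $S$ is an $\R P^1$ of flags, which certainly embeds algebraically into $\R^3$; the reason it nonetheless meets $E(\tau_-)$ is not projectivity-versus-affineness but concrete geometry (the planes through $\ell$ sweep out $\R^3$, so one contains the line of $\tau_-$). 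What your argument does show is that $S_{\C}$ must meet the complement of the \emph{complex} big cell whenever $\dim S>0$; but the lemma requires a \emph{real} intersection point, and no complexification argument by itself produces one.

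The paper's proof is entirely different and intrinsically real: given $\tau_+\in S$ antipodal to $\tau_-$, it places $\tau_-$, $\eta$, and $\tau_+$ in a common apartment, uses an element of that apartment's Weyl group fixing $\eta$ but moving $\tau_+$ (possible exactly when $S$ is not a singleton) to produce a second simplex $\tau\in S$ of type $\tmod$ in the same apartment, and then invokes the fact that an apartment cannot contain two distinct simplices of the same type both antipodal to a given simplex. Some combinatorial input of this kind, rather than projective-versus-affine rigidity, is what is needed to close your gap.
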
 

Note that the condition that this projection is {\em not} a singleton amounts to saying that $\tmod$ is not contained in $\emod$.

\begin{proof}[Proof of \Cref*{lem:nontrivial projection}]
  The projection of $\St(\eta)$ to $\Ft$ consists of all simplices $\tau$ of type $\tmod$ contained in the star of $\eta$.  Consider one of these simplices, $\tau_+$, antipodal to a simplex $\tau_-$ (of type $\iota \tmod$). Then there exists an apartment $a$ in the spherical building of the symmetric space $X$, containing $\tau_-$ and the chamber $\sigma$ which, in turn contains  $\tau_+$ and $\eta$. Since $\tau_+$ is not contained in $\eta$, the stabilizer of $\eta$ in the Weyl group $W$ of $a$ does not fix $\tau_+$, there exists $w\in W$ fixing $\eta$ but not $\tau_+$. Thus, $w(\tau_+)=\tau$ also has type $\tmod$ and is contained  in the star of $\eta$ in $a$. However, an apartment cannot contain two simplices of the same type both antipodal to the same simplex. Thus, the simplex $\tau$ cannot be antipodal to $\tau_-$. 
\end{proof}

\begin{corollary}\label{thelemma} 
Suppose that $\eta$ is a simplex in the Tits building $\tits X$ of type $\emod$ which is not a face of $\tmod$. 
Then for any $\tau\in \Ft$,  $C(\tau)$ cannot contain the projection $\pi_{\tmod}(\St(\eta))$ for any simplex $\eta$ of type $\emod$. 

In particular, if $A$ and $B$ are nonempty sets in $\Ft$ which are antipodal to each other, then neither $A$ nor $B$ can 
 contain any stars as described above. 
 \end{corollary}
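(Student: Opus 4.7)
The plan is to obtain this corollary as an almost immediate consequence of Lemma \ref{lem:nontrivial projection}. The key observation is that the hypothesis on $\eta$ — namely, that $\tmod$ is not a face of the model type $\emod$ — is precisely the condition which, by the parenthetical remark in that lemma, ensures that the projection $\pi_{\tmod}(\St(\eta))$ is not a singleton. Indeed, were $\tmod$ a face of $\emod$, every chamber in the star of $\eta$ would contain the unique $\tmod$-face of $\eta$, forcing the projection to collapse to a point; the failure of this containment is exactly what allows chambers of $\St(\eta)$ to project to distinct simplices of type $\tmod$.

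Granting that the projection is not a singleton, the first statement follows directly from Lemma \ref{lem:nontrivial projection}: $\pi_{\tmod}(\St(\eta))$ is not contained in the antipodal set of any simplex $\tau_-\in G/P_{\iota\tmod}$. Since $\tmod$ is $\iota$-invariant, we identify $G/P_{\iota\tmod}$ with $G/P_{\tmod}$, and conclude that $\pi_{\tmod}(\St(\eta))\not\subset C(\tau)$ for every $\tau\in G/P_{\tmod}$, as required.

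For the ``in particular'' clause, I would argue by contradiction. Suppose $A,B\subset G/P_{\tmod}$ are nonempty and antipodal relative to each other, and that $A$ contains some projection $\pi_{\tmod}(\St(\eta))$ of the type described. Pick any $b\in B$; by mutual antipodality every $a\in A$ lies in $C(b)$, so $A\subset C(b)$, and hence $\pi_{\tmod}(\St(\eta))\subset C(b)$, contradicting the first part. Swapping the roles of $A$ and $B$ rules out the other containment.

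There is no serious obstacle here: the corollary is a repackaging of Lemma \ref{lem:nontrivial projection} in the form that will be convenient for later applications. The only point requiring minor care is the translation between the face condition on $\eta$ and the non-singleton hypothesis of the lemma.
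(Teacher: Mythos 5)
Your proposal is correct and follows exactly the route the paper intends: the corollary is stated without proof precisely because it is the immediate combination of the non-singleton observation (the parenthetical in \autoref{lem:nontrivial projection}, using $\tmod\not\subset\emod$) with that lemma's conclusion, plus the $\iota$-invariance of $\tmod$ to identify $G/P_{\iota\tmod}$ with $G/P_{\tmod}$; your handling of the ``in particular'' clause via $A\subset C(b)$ for any $b\in B$ is the intended one-line argument.
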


\subsection{Contraction and regularity}\label{sec:conreg}

Let $\tmod\subset\smod$ be a face (which is not necessarily $\iota$-invariant).
We equip
\[
 \bar X^\tmod \coloneqq X \sqcup \Ft.
\]
with the topology of {\em flag-convergence}; see \cite[Definition 3.90]{KLP:Morse}.
A sequence $(g_n)$ in $G$ is said to be {flag-converging} to $\tau_+\in \Ft$, denoted by $g_n\raf \tau_+$, if the orbit-sequence $(g_n x)$ flag-converges to $\tau_+$ in $\bar X^\tmod$.
This notion is independent of the choice of $x\in X$. In particular, if a sequence $(g_n)$ converges to $\tau_+$ then for $h\in G$, the sequence of compositions $(g_nh)$ also  flag-converges to $\tau_+$. At the same time, the sequence of compositions $(h g_n)$ flag-converges to $h(\tau_+)$. 

\begin{remark}\label{foot:regular}
We do not define the notion of a {\em $\tmod$-regular sequence} in this paper. We refer our reader to \cite[Definition 4.7]{MR3811766}. We note only that a sequence is {\em not} $\tmod$-regular  if and only if it contains a subsequence, which has no accumulation points in $\Ft$. 
Note that if a sequence $(g_n)$ in $G$ is $\tmod$-regular, then for any $h\in G$ the sequence $(hg_n)$ is also $\tmod$-regular. 
 Moreover, a sequence $(g_n)$ in $G$ is $\tmod$-regular if and only if $(g_n^{-1})$ is $\iota\tmod$-regular.
 In particular, if $\tmod$ is $\iota$-invariant, then $(g_n)$ is $\tmod$-regular  if and only if the inverse sequence $(g_n^{-1})$ is $\tmod$-regular; see the fourth paragraph of \cite[page 2560]{MR3811766}. 
 \end{remark}

\begin{proposition}\label{prop:cont_reg}
 Let $(g_n)$ be a sequence in $G$, and let $\tau_\pm\in {\rm Flag}(\pm\tmod)$. The following are equivalent:
\begin{enumerate}[label=(\roman*)]\itemsep0em
 \item As $n\ra\infty$,
 $
  g_n \vert_{C(\tau_-)} \ra \tau_+
 $
 uniformly on compacts.
 
 \item As $n\ra\infty$,
 $
  g_n^{-1} \vert_{C(\tau_+)} \ra \tau_-
 $
 uniformly on compacts.
 
 \item The sequence $(g_n)$ (equivalently, $(g_n^{-1})$) is $\tmod$-regular ($\iota\tmod$-regular), and $g_n^{\pm1} \raf \tau_\pm$ as $n\ra\infty$.
\end{enumerate}
\end{proposition}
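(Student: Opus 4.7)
The plan is to prove (i) $\Leftrightarrow$ (ii) formally and then to establish (iii) $\Leftrightarrow$ (i) directly. First, (i) $\Leftrightarrow$ (ii) follows by replacing $(g_n)$ with $(g_n^{-1})$ and swapping $\tau_+ \leftrightarrow \tau_-$; condition (iii) is itself symmetric in $g_n \leftrightarrow g_n^{-1}$ by \autoref{foot:regular}, so this substitution converts one equivalence into the other.

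For (iii) $\Rightarrow$ (i), I will use Cartan decompositions $g_n = k_n a_n l_n^{-1}$ with $k_n, l_n \in K$ and $a_n$ in the closed positive Weyl chamber of a model maximal flat. The $\tmod$-regularity of $(g_n)$ is equivalent to the vector $\log a_n$ escaping every $\tmod$-wall, i.e., staying in the open $\tmod$-star at infinity; after passing to subsequences using compactness of $K$, the assumptions $g_n^{\pm 1} \raf \tau_\pm$ identify $\tau_+$ and $\tau_-$ with $\lim k_n \Pt$ and $\lim l_n \Pt$ respectively. Standard contraction dynamics for such regular diagonal elements on $\Ft$ then give uniform contraction of compact subsets of the model open Schubert cell onto a single point, and pushing this forward by $k_n$ (with the $l_n^{-1}$ acting on the source side) converts it into uniform contraction of $C(\tau_-)$ onto $\tau_+$. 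A standard subsequence-of-subsequence argument promotes the subsequential statement to convergence of the full sequence, yielding (i).

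The substantive direction is (i) $\Rightarrow$ (iii), which I split into two steps. Step 1: establish $\tmod$-regularity of $(g_n)$. If $(g_n)$ were not $\tmod$-regular, then by \autoref{foot:regular} some subsequence would admit no flag-accumulation point in $\Ft$. Cartan-decomposing that subsequence and refining further using compactness of $K$ and of the unit sphere in the Weyl chamber, the normalized Cartan projections $\log a_n/\|\log a_n\|$ converge to a unit vector lying on a $\tmod$-wall. The dynamics on $\Ft$ of such a wall-limit sequence cannot collapse any open subset to a single point: the accumulation set of $g_n(K_0)$ for a compact $K_0 \subset C(\tau_-)$ with nonempty interior contains an entire fiber of the projection $\Ft \ra {\rm Flag}(\emod')$ for some face $\emod' \subsetneq \tmod$ on the wall, which is positive-dimensional (by \autoref{lem:connected fibers}(ii), the projection of a star is connected, and non-triviality follows as in \autoref{lem:nontrivial projection}). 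This contradicts (i). Step 2: with $\tmod$-regularity established, pick any flag-subsequential limit $\tau'_+$ of $(g_n)$ along a subsequence on which $(g_n^{-1})$ also flag-converges, to some $\tau'_-$. Applying the already-proven (iii) $\Rightarrow$ (i) along this subsequence gives uniform convergence $g_n|_{C(\tau'_-)} \to \tau'_+$; since $C(\tau_-) \cap C(\tau'_-)$ is nonempty and open, both $\tau_+$ and $\tau'_+$ are uniform limits of $g_n$ on a common compact set, so $\tau'_+ = \tau_+$. Hence $g_n \raf \tau_+$; a symmetric argument gives $g_n^{-1} \raf \tau_-$.

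The main obstacle will be Step 1 of (i) $\Rightarrow$ (iii): ruling out the wall-approach of the Cartan projection requires identifying the accumulation set of $g_n$-images of an open set with a positive-dimensional subvariety of $\Ft$, and this is precisely where the structural results about stars in flag manifolds (\autoref{lem:connected fibers} and \autoref{lem:nontrivial projection}) enter. Once this non-collapse is in hand, the rest is a clean extraction-of-limits argument using density of antipodal open cells.
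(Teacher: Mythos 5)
Your plan differs from the paper's: the paper simply cites \cite{MR3811766} for (i)$\iff$(ii) (Lemma 4.4 there) and for (iii)$\implies$(i) (Lemma 4.18), and then derives (iii) from (i) \emph{and} (ii) together via uniqueness of the attractor; you instead propose to prove (iii)$\iff$(i) from scratch and to recover (i)$\iff$(ii) afterwards by the substitution $g_n\mapsto g_n^{-1}$, $\tau_\pm\mapsto\tau_\mp$, using that (iii) is inversion-symmetric. That logical scheme is legitimate in principle, and your Step~1 of (i)$\implies$(iii) (ruling out $\emod$-pure subsequences because their limit maps send open sets onto stars whose $\tmod$-projections are connected and non-degenerate) is sound and is in fact the same mechanism the paper uses in \autoref{prop:regulairty}. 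The direction (iii)$\implies$(i) via Cartan decomposition is also a standard and correct reproof of the cited lemma.

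The genuine gap is in Step~2, at the words ``a symmetric argument gives $g_n^{-1}\raf\tau_-$.'' Your argument for $g_n\raf\tau_+$ works because $\tau_+$ appears in hypothesis (i) as the \emph{target} of the contraction, so any subsequential attractor can be compared with $\tau_+$ on the overlap of two dense open sets. But $\tau_-$ enters (i) only as the label of the exceptional set off which the contraction is uniform. Running the symmetric argument on $(g_n^{-1})$, you obtain a subsequential limit $\tau_-'$ with $g_{n_k}^{-1}|_{C(\tau_+)}\ra\tau_-'$, and you have nothing to compare $\tau_-'$ against: identifying $\tau_-'$ with $\tau_-$ requires knowing that the repelling flag of a contracting sequence is uniquely determined by (i), equivalently that (i)$\implies$(ii) --- exactly the statement you deferred until after the equivalence is established. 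This is a circularity, not a notational symmetry: it is a substantive dynamical fact that uniform contraction of $C(\tau_-)$ onto $\tau_+$ cannot hold simultaneously for two distinct choices of $\tau_-$ (its proof requires analyzing the dynamics on the exceptional set $E(\tau_-')$, e.g.\ via the pure-sequence limit maps of \autoref{prop:KL9.5}), and it is precisely the content of the reference the paper invokes for (i)$\iff$(ii). To close the gap you must either prove (i)$\implies$(ii) directly (handling the case where $g_{n}^{-1}x_n$ accumulates on $E(\tau_-)\setminus\{\tau_-\}$), or follow the paper in citing \cite[Lemma 4.4]{MR3811766} and then deducing (iii) from (i) and (ii) jointly.
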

 
\begin{proof}
For (i) $\iff$ (ii), see \cite[Lemma 4.4]{MR3811766}.

 For the direction (iii) $\implies$ (i), see \cite[Lemma 4.18]{MR3811766}.
 
 We now show that (i) and (ii) together imply (iii).
 For (i) implies $(g_n)$ is $\tmod$-regular (similarly, (ii) implies $(g^{-1}_n)$ is $\tmod$-regular), see \cite[Proposition 4.16]{MR3811766}.
 It remains to show that $g_n^\pm \raf \tau_\pm$.
 If $\hat\tau_+ \in \Ft$ is any flag-accumulation point for the sequence $(g_n)$, then there exists a subsequence $(g_{n_k})$ of $(g_n)$ such that $g_{n_k} \raf \hat\tau_+$.
 Applying the direction (iii) $\implies$ (i) and after extraction of $(g_{n_k})$, there exists $\hat\tau_-\in {\rm Flag}(\iota\tmod)$ such that $g_{n_k}\vert_{C(\hat\tau_-)} \ra \hat\tau_+$.
 By the uniqueness of the attractor, see \cite[Lemma 4.6]{MR3811766}, we get that $\tau_+ = \hat\tau_+$.
 Hence, $g_n \raf \tau_+$.
 Similarly, applying the direction (iii) $\implies$ (ii), it follows that $g_n^{-1} \raf \tau_-$.
\end{proof}

\subsection{Pure sequences}\label{sec:pure}

In this section, our discussion concerns {\em $\emod$-pure} sequences in $G$, where $\emod\subset\smod$ is a face.
We do not give a definition of $\emod$-pure sequences here as it requires some lengthier discussion, but we refer our readers to \cite[Definition 4.7]{MR3811766}.

\begin{remark}\label{rem:pure}
 The condition of being $\emod$-pure is stronger than being $\emod$-regular. Furthermore, if a divergent\footnote{That is, the sequence $(g_n)$ has no accumulation points in $G$.}
 sequence $(g_n)$ in $G$ {\em is not} $\tmod$-regular, then it contains a subsequence which is $\emod$-pure for some face $\emod$ of $\smod$ which does not contain 
$\tmod$. 
See \cite{MR3811766} for more details.
\end{remark}

The accumulation dynamics on the Furstenburg boundary of $\emod$-pure sequences is well-understood by \cite[Prop. 9.5]{MR3811766}; we record their result in the proposition below.
For $\eta\in {\rm Flag}(\emod)$, let $C_{\rm Fu}(\eta) \coloneqq \pi_{\emod}^{-1}(C(\eta))$. See \eqref{eqn:proj} for the definition of the map $\pi_\emod$.

\begin{proposition}\label{prop:KL9.5}
 Suppose that a sequence $(g_n)$ in $G$ is $\emod$-pure and $\emod$-contracting\footnote{That is, $g_n\vert_{C(\eta_-)} \to \eta_+$ uniformly on compacts.}
  for $\eta_\pm \in {\rm Flag}(\pm \emod)$.
 Then, after extraction,
 \[
  g_n\vert_{C_\Fu(\eta_-)} \ra \phi \quad \text{uniformly on compacts},
 \]
 where the map 
 \[
 \phi : C_\Fu(\eta_-) \ra \St(\eta_+)
 \]
 is an open (in manifold topology) algebraic map.
 Moreover, for every $\hat{\eta}\in C(\eta_-)$, the restriction $\phi\vert_{\St(\hat{\eta})}$ is given by the restriction of an element of $g\in G$, and hence is an algebraic isomorphism.
\end{proposition}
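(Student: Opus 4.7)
My plan is to prove this using the standard KAK (Cartan) decomposition combined with an asymptotic Bruhat-cell analysis. Choose a maximal $\R$-split Cartan subgroup $A \subset G$ with closed positive Weyl chamber $A^+$, and write $g_n = k_n a_n \ell_n$ with $k_n, \ell_n \in K$ and $a_n \in A^+$. Since $K$ is compact, after extraction I may assume $k_n \to k_\infty$ and $\ell_n \to \ell_\infty$ in $K$. The $\emod$-purity hypothesis (after one more extraction) translates to: every simple root $\alpha$ not occurring in the Levi factor of $P_\emod$ satisfies $\alpha(\log a_n) \to +\infty$, while the remaining simple roots stay bounded on $\log a_n$. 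From this and the definition of flag-convergence, the attracting and repelling flags are identified as $\eta_+ = k_\infty \bar\eta$ and $\eta_- = \ell_\infty^{-1} \iota \bar\eta$, where $\bar\eta \in G/P_\emod$ is the class of the identity coset.

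The core computation takes place on the opposite big Bruhat cell. Let $U_\emod^\pm$ denote the unipotent radicals of $P_\emod$ and its opposite. The map $u \mapsto u \bar\eta$ identifies $U_\emod^-$ with $C(\iota \bar\eta) \subset G/P_\emod$, and combined with the $P_\emod$-homogeneity of stars this gives a product identification $C_\Fu(\iota \bar\eta) \cong U_\emod^- \times \St(\bar\eta)$. Fix a compact set $L \subset C_\Fu(\eta_-)$; its image $\ell_\infty(L)$ lies in a compact set of Bruhat parameters $(u,\sigma) \in U_\emod^- \times \St(\bar\eta)$. Then
\[
 g_n (u\sigma) \;=\; k_n \bigl( a_n u a_n^{-1} \bigr) (a_n \sigma).
\]
Because every root of $A$ on $\mathrm{Lie}(U_\emod^-)$ is a nonpositive combination of simple roots with at least one coefficient on a root that blows up, $a_n u a_n^{-1} \to e$ uniformly in $u$ on compact subsets of $U_\emod^-$. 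Meanwhile $a_n$ preserves $\St(\bar\eta)$ and acts on it through characters that stay bounded, so after a further extraction $a_n \sigma \to p_\infty \sigma$ uniformly on $\St(\bar\eta)$, where $p_\infty$ is induced by a fixed element of $P_\emod$.

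Combining these, $\phi(u\sigma) := k_\infty p_\infty \sigma$ is the required uniform-on-compacts limit, it is manifestly algebraic, and it takes values in $\St(k_\infty \bar\eta) = \St(\eta_+)$. For a single fiber $\St(\hat\eta) \subset C_\Fu(\eta_-)$ over $\hat\eta \in C(\eta_-)$, the $U_\emod^-$-coordinate of $\ell_\infty \hat\eta$ is a single element $u_0$, so $\phi|_{\St(\hat\eta)}$ is the restriction to $\St(\hat\eta)$ of the fixed group element $k_\infty p_\infty u_0 \ell_\infty \in G$ (after identifying stars by the $P_\emod$-structure); this gives the fiberwise algebraic-isomorphism claim, and openness of $\phi$ globally follows from fiberwise openness together with the $\pi_\emod$-fibration structure on both source and target.

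The main obstacle is twofold: first, verifying that the uniform contraction $a_n u a_n^{-1} \to e$ really is uniform over $u$ in the Bruhat coordinates of a compact $L \subset C_\Fu(\eta_-)$, which requires checking that $L$ corresponds to a truly bounded set of Bruhat parameters in the affine chart; second, carefully threading the various extractions so that the identifications $\eta_+ = k_\infty \bar\eta$, $\eta_- = \ell_\infty^{-1} \iota \bar\eta$, and the extracted limit $p_\infty$ are mutually consistent and independent of the subsubsequence. Both points are standard in the KLP framework once one unpacks the precise definition of $\emod$-purity from \cite[Definition 4.7]{MR3811766}.
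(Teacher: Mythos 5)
The paper does not actually prove this proposition: it is imported verbatim as a citation of \cite[Prop.~9.5]{MR3811766} (``we record their result in the proposition below''), so there is no internal argument to compare yours against. What you have written is a self-contained derivation in the Cartan/Bruhat framework, which is essentially the same mechanism Kapovich--Leeb use in the cited source: purity pins down which simple roots of $\log a_n$ blow up, the unipotent radical $U^-_\emod$ is contracted to the identity by $\mathrm{Ad}(a_n)$ because every root of $A$ on $\mathrm{Lie}(U^-_\emod)$ involves a blowing-up simple root with negative coefficient, and the bounded roots force the action of $a_n$ on the fiber $\St(\bar\eta)$ to range over a relatively compact set of transformations, whence a convergent sub-subsequence $p_\infty$. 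This is a correct route and, in the context of this paper, arguably a service: it makes explicit why $\phi$ is exactly ``projection onto the star coordinate followed by a fixed group element,'' which is the structure exploited later in \autoref{lem:stabilzes} and \autoref{claim:two}. Two points should be tightened. First, the identity $g_nx=k_n(a_nu\,a_n^{-1})(a_n\sigma)$ should be written with the Bruhat coordinates of $\ell_n x$ (not $\ell_\infty x$); since $\ell_n\to\ell_\infty$ and the chart is open, these coordinates converge uniformly on $L$ and land in a fixed compact set for large $n$, which is what the uniform contraction of $a_nua_n^{-1}$ actually requires. Second, in the fiberwise statement the group element is $k_\infty p_\infty u_0^{-1}\ell_\infty$ rather than $k_\infty p_\infty u_0\ell_\infty$: the projection $U^-_\emod\times\St(\bar\eta)\to\St(\bar\eta)$ restricted to $u_0\St(\bar\eta)$ is left translation by $u_0^{-1}$. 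Neither issue affects the validity of the argument, and the ``after extraction'' in the statement absorbs your dependence of $p_\infty$ (hence of $\phi$) on the chosen subsequence.
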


Suppose that we have a  $\emod$-pure sequence $(g_n)$ in $G$, such that
\[
 g_n^{\pm1} \raf \eta_\pm \in {\rm Flag}(\pm\emod),
\]
and which satisfies the 
the conclusion of the above proposition, i.e., $g_n\vert_{C_\Fu(\eta_-)} \ra \phi$ uniformly on compacts as $n\ra\infty$.
Consider a point $\eta\in {\rm Flag}(\emod)$ antipodal to $\eta_-$.
Then, $\St(\eta) \subset C_\Fu(\eta_-)$.
The proposition above implies that, for large $n$, $g_n : \St(\eta) \ra \Fs$ {\em approximates} the algebraic isomorphism $\phi: \St(\eta) \ra \St(\eta_+)$.

\begin{figure}[h]
\centering
\begin{overpic}[scale=.65,tics=5]{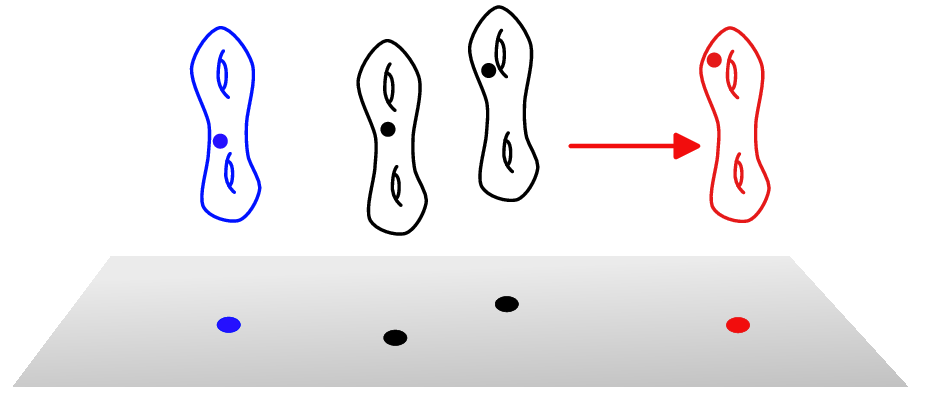}
\put(-2,16.5){\rotatebox{90}{$\xleftarrow[]{\pi_\emod}$}}
\put(-8,28){\small$\Fs$}
\put(-8,13){\small${\rm Flag}(\emod)$}
\put(19,25){\rotatebox{90}{\tiny\color{blue}$\St(\eta)$}}
\put(83,25){\rotatebox{90}{\tiny\color{red}$\St(\eta_+)$}}
\put(67,30){\large\color{red}$\phi$}
\put(42,26.5){\small$x$}
\put(72,38){\color{red}\small$x_+$}
\put(24,5){\color{blue}$\eta$}
\put(40,3){$g_n\eta$}
\put(51.5,7){$g_m\eta$}
\put(78.5,5){\color{red}$\eta_+$}
\end{overpic}
\caption{}\label{fig:phi}
\end{figure}

After fixing a distance function $d$ on $\Fs$ which is compatible with the manifold topology, we consider the quantity
\[
 D_{m,n} \coloneqq \max_{y\in \St(\eta)} d(g_m y,g_n y) 
 =  \max_{x\in \St(g_n \eta)} d(g_m g_n^{-1} x, x).
\]
See \Cref{fig:phi} for an illustration.

\begin{lemma}\label{lem:stabilzes}
$D_{m,n} \ra 0$ as $m, n\ra \infty$.
\end{lemma}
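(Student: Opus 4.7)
The plan is to deduce the estimate directly from Proposition \ref{prop:KL9.5} together with the compactness of the star $\St(\eta)$.

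First I would verify that $\St(\eta)$ is a compact subset of $C_\Fu(\eta_-)$. Every chamber in $\St(\eta)$ has $\eta$ as a face of type $\emod$, so $\pi_\emod(\St(\eta)) = \{\eta\}$; since $\eta$ is antipodal to $\eta_-$ by assumption, $\{\eta\} \subset C(\eta_-)$, and hence $\St(\eta) \subset \pi_\emod^{-1}(C(\eta_-)) = C_\Fu(\eta_-)$. Compactness of $\St(\eta) \cong P_\emod/P_\smod$ is standard.

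With this in hand, the uniform-on-compacts convergence $g_n|_{C_\Fu(\eta_-)} \to \phi$ provided by Proposition \ref{prop:KL9.5} specializes to uniform convergence on $\St(\eta)$: for every $\e > 0$ there exists $N$ such that $\sup_{y \in \St(\eta)} d(g_n y, \phi(y)) < \e/2$ whenever $n \geq N$. A uniformly convergent sequence of maps into a metric space is automatically uniformly Cauchy, so for all $m, n \geq N$ and every $y \in \St(\eta)$,
\[
d(g_m y, g_n y) \leq d(g_m y, \phi(y)) + d(\phi(y), g_n y) < \e,
\]
whence $D_{m,n} < \e$, giving $D_{m,n} \to 0$.

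I do not anticipate any real obstacle: the lemma is essentially a reformulation of the uniform convergence already guaranteed by Proposition \ref{prop:KL9.5} as the uniform Cauchy property on the compact set $\St(\eta)$, the only content beyond this being the antipodality-based inclusion $\St(\eta) \subset C_\Fu(\eta_-)$.
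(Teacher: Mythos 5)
Your proof is correct, and it rests on exactly the same two inputs the paper uses: the uniform-on-compacts convergence $g_n|_{C_\Fu(\eta_-)}\to\phi$ from \autoref{prop:KL9.5}, and the inclusion of the compact star $\St(\eta)$ in $C_\Fu(\eta_-)$ (which the paper also records just before stating the lemma). The difference is in execution. The paper argues by contradiction: it works with the second expression $\max_{x\in\St(g_n\eta)}d(g_m g_n^{-1}x,x)$, extracts convergent subsequences $x_k\to x_+$ and $y_k=g_{n_k}^{-1}x_k\to y_0$, and identifies both limits with $\phi(y_0)$ before contradicting the assumed lower bound $\e$. You instead read $D_{m,n}$ in its first form $\max_{y\in\St(\eta)}d(g_m y,g_n y)$ and observe that uniform convergence on the compact set $\St(\eta)$ makes the sequence of maps $g_n|_{\St(\eta)}$ uniformly Cauchy, which is literally the assertion. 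Nothing is lost in your shorter route --- the maximum is attained since $\St(\eta)$ is compact and the maps are continuous, and the triangle inequality does the rest --- so your version is a cleaner, direct rendering of the same argument.
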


\begin{proof}
 Suppose that, on the contrary, there exist sequences of natural numbers $(m_k)$, and $(n_k)$ such that $m_k, n_k \ra \infty$ as $k\ra\infty$, for which the following holds: For all $k\in \N$, there exists $x_{k} \in \St(g_{n_k} \eta)$ such that
 \begin{equation}\label{eqn:dist_pos}
  d(g_{m_{k}} g_{n_k}^{-1} (x_{k}), x_{k}) \ge \epsilon >0.
 \end{equation}
 
 Note that the sequence $(x_k)$ accumulates in $\St(\eta_+)$, and $y_k \coloneqq g_{n_k}^{-1} (x_{k}) \in \St(\eta)$.
 Passing to a subsequence of  the sequence $(n_k)$, we may assume that the sequence $x_k \ra x_+ \in \St(\eta_+)$, and $y_k \ra y_0 \in \St(\eta)$, as $k\ra \infty$.
 
We will show that $\phi(y_0) = \lim_{m\ra \infty} g_{m} y_0 = x_+$.
Consider the compact set $A = \{ y_i \mid i\in\N\} \cup\{y_0\}\subset\St(\eta)$.
Then,
\[
 g_m\vert_A \ra \phi\vert_A, \quad \text{as } m\ra\infty.
\]
Since the convergence is uniform, $g_m y_i$ converges to $\phi(y_0)$ whenever $i\ra \infty$ and $m \ra \infty$.
Now, since $x_k = g_{n_k}y_k$, and $\lim_{k\ra\infty} n_k\ra\infty$, we obtain that
\begin{align*}
\phi(y_0) &
 =\lim_{i,m\ra \infty} g_{m} y_i\\
 &
 =\lim_{k\ra\infty} g_{n_k}y_k = \lim_{k\ra\infty} x_k = x_+.
\end{align*}

In particular, we get $g_{m_k} g_{n_k}^{-1}  (x_{k}) =  g_{m_k} y_k \ra x_+$ as $k\ra\infty$.
Since also $x_k \ra x_+$, we must have
\[
 \lim_{k\ra\infty}d(g_{m_k} g_{n_k}^{-1}  (x_{k}), x_k) = 0.
\]
This contradicts \eqref{eqn:dist_pos}.
\end{proof}

\subsection{A regularity criterion}\label{sec:regulairty}

Let $\tmod\subset\smod$ be a face, which is not assumed to be $\iota$-invariant.
We equip $\Ft$ with a distance function $d$ which is compatible with its manifold topology.
Note that since $\Ft$ is compact, the notion of the convergence $A_n\to \tau$ defined in \Cref{sec:convergence}, where $(A_n)$ is a sequence of nonempty compact subsets of $\Ft$ and $\tau\in\Ft$,  does not depend on the choice of such a distance function $d$.

\begin{lemma}\label{prop:regulairty}
 Let $(g_n)$ be a sequence in $G$.
 If there exists a compact subset $A \subset \Ft$ with nonempty interior and a point $\tau_+\in \Ft$ such that $g_n A \ra \tau_+$, then $(g_n)$ is $\tmod$-regular and  $(g_n)$ flag-converges to $\tau_+$.
\end{lemma}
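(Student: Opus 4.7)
The plan is to prove $\tmod$-regularity by contradiction using Proposition \ref{prop:KL9.5}, and then deduce flag-convergence via Proposition \ref{prop:cont_reg} together with the open density of antipodal sets. Suppose for contradiction that $(g_n)$ is not $\tmod$-regular. The remark preceding Proposition \ref{prop:KL9.5} lets me extract an $\emod$-pure subsequence for some face $\emod \subset \smod$ with $\tmod \not\subset \emod$, so that $g_n^{\pm 1} \raf \eta_\pm \in G/P_{\pm\emod}$. Proposition \ref{prop:KL9.5}, after further extraction, supplies an algebraic limit map $\phi : C_\Fu(\eta_-) \to \St(\eta_+)$ with $g_n|_{C_\Fu(\eta_-)} \to \phi$ uniformly on compacta, and with each restriction $\phi|_{\St(\hat\eta)}$, $\hat\eta \in C(\eta_-)$, an algebraic isomorphism onto $\St(\eta_+)$.

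The heart of the argument is to produce two distinct limit points of $g_n A$, contradicting Hausdorff convergence to the singleton $\{\tau_+\}$. Since $\pi_\tmod : G/P_\smod \to G/P_\tmod$ is an open $G$-equivariant map and $A$ has nonempty interior, $\pi_\tmod^{-1}(\Int A)$ is a nonempty open subset of $G/P_\smod$; and $C_\Fu(\eta_-) = \pi_\emod^{-1}(C(\eta_-))$ is open and dense there. Their intersection therefore contains a nonempty open set $V$. Fixing $\sigma_0 \in V$ and setting $\hat\eta_0 = \pi_\emod(\sigma_0) \in C(\eta_-)$, I invoke Lemma \ref{lem:nontrivial projection} (more precisely, its parenthetical observation: the assumption $\tmod \not\subset \emod$ says exactly that $\pi_\tmod(\St(\eta_+))$ is not a singleton). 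Since $\St(\eta_+)$ is irreducible and $\pi_\tmod|_{\St(\eta_+)}$ is a non-constant algebraic map, the composition $\pi_\tmod \circ \phi|_{\St(\hat\eta_0)}$ is non-constant on the nonempty open subset $V \cap \St(\hat\eta_0)$, so I can pick $\sigma_1, \sigma_2 \in V \cap \St(\hat\eta_0)$ with $\pi_\tmod(\phi(\sigma_1)) \ne \pi_\tmod(\phi(\sigma_2))$. Setting $a_i = \pi_\tmod(\sigma_i) \in \Int A$, the $G$-equivariance of $\pi_\tmod$ and the uniform convergence yield $g_n a_i = \pi_\tmod(g_n \sigma_i) \to \pi_\tmod(\phi(\sigma_i))$, giving two distinct accumulation points of $g_n A$, which contradicts $g_n A \to \{\tau_+\}$. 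Hence $(g_n)$ is $\tmod$-regular.

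For flag-convergence to $\tau_+$, I will show every flag-accumulation point of $(g_n)$ equals $\tau_+$. Given any subsequence, $\tmod$-regularity together with $\iota$-invariance of $\tmod$ (Remark \ref{foot:regular}) permits a further extraction with $g_{n_k}^{\pm 1} \raf \hat\tau_\pm$. By Proposition \ref{prop:cont_reg}, $g_{n_k}|_{C(\hat\tau_-)} \to \hat\tau_+$ uniformly on compacta. The open density of $C(\hat\tau_-) \subset G/P_\tmod$, combined with $\Int A \ne \emptyset$, yields a point $a \in \Int A \cap C(\hat\tau_-)$; the two limits $g_{n_k} a \to \hat\tau_+$ and $g_{n_k} a \to \tau_+$ (the latter from Hausdorff convergence) force $\hat\tau_+ = \tau_+$.

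The main obstacle is the middle step: extracting two distinct accumulation points inside $g_n A$ from a non-$\tmod$-regular subsequence. This is where Lemma \ref{lem:nontrivial projection} is essential, as it translates the combinatorial condition $\tmod \not\subset \emod$ into genuine geometric branching in $\pi_\tmod(\St(\eta_+))$. Without this non-triviality, the $\emod$-pure contraction could be consistent with $g_n A$ collapsing onto a single point, and the contradiction would fail.
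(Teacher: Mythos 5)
Your proof is correct and follows the paper's own strategy: both argue by contradiction via an $\emod$-pure subsequence and the limit map $\phi$ of \autoref{prop:KL9.5}, and both extract the contradiction from the fact that $\tmod\not\subset\emod$ forces $\pi_\tmod$ to be non-constant on $\St(\eta_+)$. The only difference is in the endgame: the paper uses Zariski density of $\pi_\tmod^{-1}(A)$ to conclude $\St(\tau_+)\supset\St(\eta_+)$, whereas you exhibit two points of $\Int A$ whose $g_n$-images converge to distinct limits --- an equally valid, slightly more hands-on way to contradict the Hausdorff convergence $g_nA\to\{\tau_+\}$.
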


\begin{proof}
Since $g_n A \ra \tau_+$ as $n\to\infty$ and $A$ has nonempty interior, it is clear that the sequence $(g_n)$ is divergent in $G$.
 Therefore, if $(g_n)$ is not $\tmod$-regular, then, after extraction, $(g_n)$ is $\emod$-pure for some $\emod \subset \smod$ which does not contain $\tmod$; see \Cref{rem:pure}.
 After further extraction, there exists $\eta_\pm\in {\rm Flag}(\pm\emod)$ such that $(g_n)$ is $\emod$ contracting for $\eta_\pm$.
 Let $\phi : C_\Fu(\eta_-) \ra \St(\eta_+) $ be the surjective algebraic map that we obtain from \Cref{prop:KL9.5}.
 Since $\widetilde A \coloneqq \pi_\tmod^{-1}(A)$ is Zariski dense (since it has nonempty interior), $\phi(\widetilde A)$ is also Zariski dense in $\St(\eta_+)$.
 Now, under the hypothesis $g_n A \ra \tau_+$, we obtain that
 \[
  g_n \widetilde A \text{ accumulates on } \St(\tau_+), \quad \text{as } n\ra\infty.
 \]
 Hence, we must have $\St(\tau_+) \supset \St(\eta_+)$, which leads to $\tmod \subset \emod$.
 This is a contradiction.
 
 The convergence  $g_n \raf \tau_+$ follows from \Cref{prop:cont_reg}.
\end{proof}

\begin{corollary}\label{prop:regulairty_metric}
 Let $(g_n)$ be a sequence in $G$.
 If there exists a compact subset $A \subset \Ft$ with nonempty interior such that 
 \[
  \lim_{n\ra\infty}{\rm diam}\, g_n A = 0,
 \]
 then $(g_n)$ is a  $\tmod$-regular sequence.
\end{corollary}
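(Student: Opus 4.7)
The plan is to reduce this statement directly to \autoref{prop:regulairty} by first showing that, after passing to a subsequence, the sets $g_n A$ Hausdorff-converge to a single point (and then upgrade the conclusion from subsequences to the full sequence).

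First, I would pick any point $a_0 \in A$ and use the compactness of $\Ft = G/P_\tmod$ to extract a subsequence $(g_{n_k})$ such that $g_{n_k}(a_0)$ converges in $\Ft$ to some point $\tau_+ \in \Ft$. The hypothesis $\lim_{n\to\infty}\diam\, g_n A = 0$ is preserved under subsequences, so $\diam\, g_{n_k} A \to 0$ as well. Since $g_{n_k}(a_0) \in g_{n_k} A$ and the diameters of these compact sets shrink to zero, a straightforward triangle-inequality argument shows that every sequence of points $x_k \in g_{n_k} A$ must also converge to $\tau_+$. In other words, $g_{n_k} A \to \{\tau_+\}$ in the Hausdorff topology on $\mathcal{C}(\Ft)$.

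Next, I would apply \autoref{prop:regulairty} to the subsequence $(g_{n_k})$: its hypotheses are exactly that there is a compact set $A \subset \Ft$ with nonempty interior such that $g_{n_k} A \to \tau_+$ in the Hausdorff topology, together with unboundedness (inherited from $(g_n)$). The conclusion is that $(g_{n_k})$ is $\tmod$-regular and flag-converges to $\tau_+$.

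Finally, to conclude that the whole sequence $(g_n)$ is $\tmod$-regular (and not just has a $\tmod$-regular subsequence), I would invoke the characterization recalled in \autoref{foot:regular}: a sequence fails to be $\tmod$-regular exactly when it contains a subsequence that does not flag-converge to any point of $\Ft$. Given any subsequence $(g_{m_k})$ of $(g_n)$, the argument above (applied to $(g_{m_k})$ in place of $(g_n)$) produces a further sub-subsequence that flag-converges, so no subsequence can be non-flag-convergent. Hence $(g_n)$ itself is $\tmod$-regular. There is no real obstacle here; the only thing to be careful about is that \autoref{prop:regulairty} a priori gives information only along the extracted subsequence, so the final passage from subsequential to global regularity via the definition in \autoref{foot:regular} is what makes the statement tight.
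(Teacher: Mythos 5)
Your proposal is correct and follows essentially the same route as the paper: extract a subsequence along which $g_nA$ Hausdorff-converges to a point (forced by the shrinking diameters) and then invoke \autoref{prop:regulairty}. The only difference is that you spell out the final upgrade from a $\tmod$-regular subsequence to regularity of the whole sequence via the criterion in \autoref{foot:regular}, a step the paper's one-line proof leaves implicit; this is a welcome clarification rather than a deviation.
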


\begin{proof}
 If ${\rm diam}\, g_n A \ra 0$ as $n\ra\infty$, then, after extraction, $g_n A \ra \tau_+$ for some $\tau_+ \in \Ft$.
 Then, \Cref{prop:regulairty} implies that $(g_n)$ is $\tmod$-regular.
\end{proof}

\subsection{Regular and Anosov subgroups}\label{sec:discrete_groups}

In this paper, we consider the following classes of discrete subgroups of $G$.

Let $\tmod \subset \smod$ be an $\iota$-invariant face.
A discrete subgroup $\G < G$ is called {\em $\tmod$-regular} if all sequences of distinct elements $(\g_n)$ in $\G$ are $\tmod$-regular.
If $\G$ is such a subgroup, then the {\em $\tmod$-limit set}
\[
 \Lambda_\tmod(\G) \subset \Ft
\]
consists of all points $\tau\in\Ft$ such that there exists a sequence $(\g_n)$ in $\G$ for which $\g_n\raf \tau$.
This is a $\G$-invariant closed subset of $\Ft$.

The class of {\em $\tmod$-Anosov subgroups} of $G$, a notion originally introduced by Labourie \cite{MR2221137}, is a special class of regular subgroups that generalizes the {\em convex cocompact} Kleinian groups into higher rank.

\begin{remark}
 There are several equivalent characterizations of $\tmod$-Anosov subgroups. For our purpose, we use the following characterization of $\tmod$-Anosov subgroups as {\em $\tmod$-asymptotically embedded} subgroups, which was introduced by Kapovich, Leeb, and Porti in \cite{MR3736790}.
 See \cite[Theorem 5.47]{MR3736790} for the equivalence between these two (and several other) definitions.
\end{remark}

\begin{definition}[Asymptotically embedded]\label{def:AE}
 A subgroup $\G < G$ is called {\em $\tmod$-asymptotically embedded} if it is $\tmod$-regular and satisfies the following conditions:
\begin{enumerate}[label=(\roman*)]\itemsep0em
 \item $\G$ as an abstract group is word-hyperbolic.
 \item The $\tmod$-flag limit set $\Lt(\G)$ is antipodal.
 \item There exists a $\G$-equivariant homeomorphism
 \[
  \xi : \vb\G \ra \Lt(\G)
 \]
 from the Gromov boundary of $\G$ onto its $\tmod$-flag limit set, which continuously extends the orbit map $o_x: \Gamma\to \Gamma x\subset X$. 
\end{enumerate}
\end{definition}

Note that the antipodality of $\Lt(\G)$ and injectivity of $\xi$ is equivalent to the condition that the map $\xi$ is {\em antipodal}, i.e. it sends distinct elements of $\vb\G$ to antipodal elements of $\Ft$. 

\section{Discreteness}\label{sec:discreteness}

In what follows, $\tmod$ will always denote an $\iota$-invariant face of $\smod$.

\begin{definition}\label{defn:ping-pong}
Let $\G_A$ and $\G_B$ be a pair of discrete subgroups of $G$.
A pair $(A,B)$ of disjoint compact subsets of $\Ft$ is called a {\em ping-pong pair} for $(\G_A,\G_B)$ if the following conditions are satisfied:

\begin{enumerate}[label=(\roman*)]\itemsep0em
 \item $A$ and $B$ have nonempty interiors.
 \item For all nontrivial elements $\alpha\in \G_A$ and $\beta\in \G_B$, we have that $\alpha B \subset \Int A$ and $\beta A \subset \Int B$.
\end{enumerate}
\end{definition}

Let $\G_A$ and $\G_B$ are discrete subgroups of $G$ such that $(\G_A,\G_B)$ admits a ping-pong pair $(A,B)$ in $\Ft$.
The classical {\em ping-pong argument} of Felix Klein (cf. \cite{tits1972free}), shows that the subgroup $\G \coloneqq \inp*{\G_A}{\G_B}$ generated by $\G_A$ and $\G_B$ in $G$ is {\em naturally} isomorphic to the  free product $\G_A*\G_B$:  

\begin{lemma}[Ping-pong]\label{lem:ping-pong}
 Let $\G_A$ and $\G_B$ be two discrete subgroups of $G$ such that $(\G_A, \G_B)$ admits a ping-pong pair $(A,B)$ in $\Ft$.
 Then, the natural homomorphism $\phi: \G_A * \G_B \ra G$ extending the embeddings $\G_A\to G, \G_B\to G$, 
 is injective.
\end{lemma}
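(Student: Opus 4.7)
The plan is the classical Klein ping-pong argument adapted to flag manifolds. Given a nontrivial $\g \in \G_A * \G_B$ with reduced form $\g = \beta_k\alpha_k\cdots\beta_1\alpha_1$, I would first dispatch the single-letter case $\rel(\g) = 1$: then $\g$ is a nontrivial element of $\G_A$ or $\G_B$, and the restriction of $\phi$ to either factor is the inclusion into $G$, so $\phi(\g) \neq 1$. Henceforth assume $\rel(\g)\ge 2$, so that every letter except possibly $\alpha_1$ and $\beta_k$ is nontrivial.

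The core step is to apply $\phi(\g)$ to a carefully chosen test set and track the image via the ping-pong inclusions $\alpha(B)\subset\Int A$ and $\beta(A)\subset\Int B$, valid for all nontrivial $\alpha\in\G_A$ and $\beta\in\G_B$. Reading the word right-to-left: if the initial letter $\alpha_1$ lies in $\G_A$, begin with the test set $B$; if it lies in $\G_B$, begin with $A$. Successive applications alternate the image between $\Int A$ and $\Int B$, so the final image lies in $\Int A$ or $\Int B$ depending on the group of the terminal letter of $\g$. When the initial and terminal letters lie in the same factor, the final image lies in the side opposite to the test set; disjointness $A\cap B=\emptyset$ then gives $\phi(\g)\ne 1$ immediately.

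The subtle case is when the initial and terminal letters lie in different factors: then $\phi(\g)$ sends the test set (say $B$) into its own interior $\Int B$. To conclude $\phi(\g)\ne \id$ here, I would argue that $B$ is a proper compact subset of the connected manifold $\Ft = G/P_\tmod$ (proper since $A\ne\emptyset$ is disjoint from $B$), so $B$ cannot be open; hence $B\setminus \Int B \ne\emptyset$, and any boundary point $x\in B\setminus\Int B$ is fixed by the identity but mapped by $\phi(\g)$ into $\Int B$, which does not contain $x$. The main mild obstacle is precisely this last step: the clean disjointness argument breaks down in the different-factor case, and one must invoke connectedness of the flag manifold to ensure $B$ has nonempty topological boundary.
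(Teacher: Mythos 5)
Your proof is correct and follows essentially the same ping-pong argument as the paper: track the image of a test set ($A$ or $B$) under the reduced word, and conclude via disjointness in the same-factor case and via the strict inclusion $\Int B \subsetneq B$ in the mixed case. The only difference is that you justify $B \neq \Int B$ explicitly (properness of the compact set $B$ plus connectedness of $\Ft$), a step the paper asserts without comment.
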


\begin{proof}
In the  free product $\G_A * \G_B$, pick any nontrivial element $\g$.

Suppose that the rightmost letter of the reduced form of $\g$ is in $\G_A$ and the leftmost letter of the reduced form of $\g$ is in $\G_B$ (see \Cref{sec:reduced}).
Then, $\phi(\g) A \subset \Int A$. Hence, $\phi(\g) \ne 1 \in G$.
Similarly if the rightmost/leftmost letters of the reduced form of $\g$ are both in $\G_A$, then $\phi(\g) B  \subset \Int A$. Hence, $\phi(\g) \ne 1 \in G$ as well. The verification of the non-triviality of $\phi(\g)$ in the other two possibilities on the location of rightmost/leftmost letters in the reduced form of $\g$ is similar; we leave the details to the reader.
\end{proof}

In what follows, we reserve the notation $\G$ to denote the subgroup of $G$ generated by $\G_A$ and $\G_B$, 
\[
 \G \coloneqq \inp*{\G_A}{\G_B} \cong \G_A * \G_B.
\]

Next, we impose the $\tmod$-regularity condition for the groups $\G_A$ and $\G_B$.
Recall the notion of the $\tmod$-flag limit set from  \Cref{sec:discrete_groups}.
The lemma below shows that the limit sets of $\G_A$ and $\G_B$ are contained in $A$ and $B$, respectively.

\begin{lemma}\label{lem:Lt}
 Let $\G_A,\G_B, A$, and $B$ be as in \Cref{lem:ping-pong}.
 Assume that $\G_A$ (resp. $\G_B$) is $\tmod$-regular.
 Then,
 \[
  \Lt(\G_A) \subset A \quad (\text{resp. }
  \Lt(\G_B) \subset B).
 \]
\end{lemma}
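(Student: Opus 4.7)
The plan is to show directly that every flag-limit point of a sequence in $\G_A$ lies in $A$, using the contraction dynamics of $\tmod$-regular sequences (\autoref{prop:cont_reg}) and the openness and density of antipodal cells.

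First, I would unpack the hypothesis. Let $\tau\in \Lt(\G_A)$. By definition of the flag-limit set, there is a sequence of distinct elements $(\alpha_n)$ in $\G_A$ with $\alpha_n \raf \tau$. Since $\G_A$ is $\tmod$-regular, this sequence is $\tmod$-regular, as is $(\alpha_n^{-1})$ (see \autoref{foot:regular}). Passing to a subsequence and using compactness of $\Ft$, I may assume $\alpha_n^{-1} \raf \tau_-$ for some $\tau_-\in \Ft$. Discarding finitely many terms, I may also assume every $\alpha_n$ is nontrivial.

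Next, I would invoke \autoref{prop:cont_reg} applied to $(\alpha_n)$ with attractor $\tau$ and repellor $\tau_-$: this yields $\alpha_n|_{C(\tau_-)}\to \tau$ uniformly on compact subsets. Now the key point is to locate a test-point inside $B$ that lies in $C(\tau_-)$. The cell $C(\tau_-)\subset \Ft$ is open and dense, and the hypothesis of the ping-pong pair gives $\Int(B)\neq \emptyset$. Therefore $\Int(B)\cap C(\tau_-)$ is a nonempty open set, and I can pick $b$ in this intersection. On one hand, the uniform convergence gives $\alpha_n(b)\to \tau$. On the other hand, since $\alpha_n\neq 1$, the ping-pong property \autoref{defn:ping-pong}(ii) forces $\alpha_n(b)\in \alpha_n(B)\subset \Int(A)\subset A$ for every $n$. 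Since $A$ is compact (hence closed), $\tau\in A$.

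The case $\Lt(\G_B)\subset B$ is symmetric, interchanging the roles of $A$ and $B$ and of $\G_A$ and $\G_B$.

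I do not expect a serious obstacle here: the only nonroutine input is the density of $C(\tau_-)$ together with the fact that $B$ has nonempty interior, which together bypass the need to assume any explicit antipodality between $\Lt(\G_A)$ and $B$. All the dynamical work has already been packaged into \autoref{prop:cont_reg}, so the proof is essentially a one-step application of the attractor dynamics combined with the $\G_A$-invariance statement ``$\alpha(B)\subset A$ for nontrivial $\alpha\in\G_A$''.
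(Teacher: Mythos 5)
Your proposal is correct and follows essentially the same route as the paper: extract a subsequence with $\alpha_n^{\pm 1}\raf\tau_\pm$, use \autoref{prop:cont_reg} to get attraction on $C(\tau_-)$, pick a test point in $B\cap C(\tau_-)$ (nonempty since $C(\tau_-)$ is open dense and $B$ has nonempty interior), and conclude by the ping-pong inclusion $\alpha_n(B)\subset A$ together with compactness of $A$. The only cosmetic difference is that you are slightly more explicit about the subsequence extraction and the appeal to \autoref{foot:regular}.
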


\begin{proof}
Let us verify that $\Lt(\G_A) \subset A$. The verification of $\Lt(\G_B) \subset B$ is similar.

Pick arbitrary point $\tau_+ \in \Lt(\G_A)$.
There exists a sequence $(\alpha_n)$ in $\G_A$ such that $\alpha_n^{\pm1} \raf \tau_\pm$, where $\tau_-$ is some point in the limit set $\Lt(\G_A)$.
Since $B$ 
has nonempty interior, and $C(\tau_-)$ is an open dense subset of $\Ft$, 
$B\cap C(\tau_-)  \ne \emptyset$. 
Pick any point $\hat\tau\in B \cap C(\tau_-)$.
Then, by \Cref{prop:cont_reg},  $\lim_{n\ra\infty} \alpha_n \hat\tau = \tau_+$.
On the other hand, for all $n\in\N$, $\alpha_n \hat\tau\in \alpha_n(B)\subset  A$, as long as $\alpha_n\ne 1$.
By the compactness of $A$, we get $\tau_+\in A$. Hence, $\Lt(\G_A) \subset A$.
\end{proof}

Now, we state and prove the main result of this section.

\begin{proposition}[Discreteness]\label{prop:discrete}
Let $\G_A$ and $\G_B$ be $\tmod$-regular subgroups of $G$ such that $(\G_A, \G_B)$ admits a ping-pong pair $(A,B)$ in $\Ft$.
If $A$ is antipodal to $B$, 
then the group $\G\coloneqq\inp*{\G_A}{\G_B} \cong \G_A * \G_B$ is a discrete subgroup of $G$.
\end{proposition}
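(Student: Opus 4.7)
The plan is to argue by contradiction. If $\G$ were not discrete, then by passing to differences of a convergent subsequence and further extracting, there would exist a sequence $(\g_n)$ of distinct nontrivial elements of $\G$ with $\g_n \to 1$ in $G$. By \autoref{lem:ping-pong}, each $\g_n$ has a unique reduced word, and after a subsequence I may assume both the initial letter and the terminal letter of $\g_n$ lie in fixed factors of $\G_A * \G_B$, giving four cases AA, AB, BA, BB.

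In cases AA and BB, the chained ping-pong inclusions yield $\g_n(B) \subset \Int A$ (resp.\ $\g_n(A) \subset \Int B$). Taking any $b \in B$, I get $\g_n(b) \in A$ while $\g_n(b) \to b \in B$, contradicting $A \cap B = \emptyset$.

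Cases AB and BA (symmetric; I focus on AB) are harder, since now the chained inclusions only give $\g_n(B) \subset \Int B$, which is compatible with $\g_n \to 1$. I truncate the terminal letter: write $\g_n = \beta_{k_n} \eta_n$, so that $\eta_n$ has both initial and terminal letters in $\G_A$ and thus falls in case AA, with $\eta_n(B) \subset \Int A$. Two subcases arise depending on the terminal letters $\beta_{k_n} \in \G_B$. If they are eventually constant, equal to some $\beta$, then $\eta_n \to \beta^{-1}$; for $b \in B$, $\beta^{-1}(b) = \lim \eta_n(b) \in A$, hence $\beta^{-1}(B) \subset A$. Combined with the ping-pong inclusion $\beta(A) \subset \Int B$ this forces $B \subset \Int B$, so $B$ is clopen in $\Ft$; since $\Ft$ is connected and $\emptyset \ne A \subset \Ft \setminus B$, this is a contradiction. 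Otherwise the $\beta_{k_n}$ are eventually distinct; by $\tmod$-regularity of $\G_B$ together with \autoref{lem:Lt}, after extraction $\beta_{k_n}^{\pm1} \raf \tau_\pm^B \in \Lt(\G_B) \subset B$, and \autoref{prop:cont_reg} gives $\beta_{k_n}^{-1}|_{C(\tau_+^B)} \to \tau_-^B$ uniformly on compacts. Choosing $b \in \Int B \cap C(\tau_+^B)$ (nonempty because $C(\tau_+^B)$ is open dense), the factorization $\eta_n(b) = \beta_{k_n}^{-1}(\g_n(b))$ together with $\g_n(b) \to b$ yields $\eta_n(b) \to \tau_-^B$; but $\eta_n(b) \in \Int A$, so $\tau_-^B \in A \cap B = \emptyset$, contradiction.

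The main obstacle is precisely cases AB and BA, where the direct argument of cases AA/BB breaks because $\g_n$ maps $B$ (resp.\ $A$) into itself. The truncation $\g_n = \beta_{k_n} \eta_n$ brings us into case AA at the cost that the new sequence $\eta_n$ no longer converges to the identity; handling this requires separately treating whether the truncated letters remain bounded (where the connectedness of $\Ft$ closes the argument) or diverge (where the $\tmod$-regularity of the factor together with the antipodality of $A$ and $B$ closes it).
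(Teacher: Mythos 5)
Your proof is correct, and its overall skeleton matches the paper's: argue by contradiction with a sequence of distinct nontrivial $\g_n\to 1$, reduce via the ping-pong inclusions to the case where the terminal letter lies in the factor opposite to the initial one, and then split on whether the terminal letters form a finite or an infinite set. The interesting divergence is in the infinite (unbounded) subcase. The paper works with Hausdorff convergence of the sets $\g_n B\to B$ and uses the \emph{antipodality} hypothesis there: since $b_-\in\Lt(\G_B)\subset B$ and $A\subset C(b_-)$, the whole set $\beta_n A$ collapses to the point $b_+$, so $\g_n B\subset\beta_n A$ cannot converge to $B$. You instead apply the inverse dynamics $\beta_{k_n}^{-1}|_{C(\tau_+^B)}\to\tau_-^B$ to the single point $\g_n(b)\approx b$, where $b$ is chosen in $B\cap C(\tau_+^B)$ using only that $C(\tau_+^B)$ is open and dense; the contradiction $\tau_-^B\in A\cap B$ then needs only the disjointness of $A$ and $B$. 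Consequently your argument never invokes the antipodality of $A$ relative to $B$, so it establishes discreteness under strictly weaker hypotheses than the proposition states — a genuine (if modest) improvement. In the bounded subcase both arguments ultimately rest on the same fact: your explicit clopen-set argument ($B\subset\beta(A)\subset\Int B$ forces $B$ clopen, impossible since $\Ft$ is connected) is exactly what justifies the paper's terser assertion that $\beta A$ is \emph{properly} contained in $B$. Two cosmetic points: the dichotomy on the terminal letters is cleaner as ``some value repeats infinitely often'' versus ``after extraction the $\beta_{k_n}$ are pairwise distinct, hence divergent in $G$ by discreteness of $\G_B$''; and you should record that $\tau_-^B\in\Lt(\G_B)$ because $(\beta_{k_n}^{-1})$ is itself a sequence in $\G_B$, so that \autoref{lem:Lt} applies to it.
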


\begin{proof}
 Suppose, to the contrary, that  $\G$ is not discrete.
 Then, there exists a sequence $(\g_n)_{n\in\N}$ of distinct elements of $\G$ such that $\g_n\ra 1 \in G$. 
  After passing to a subsequence, we may assume that
\begin{enumerate}[label=(\roman*)]\itemsep0em
 \item either, for all $n\in \N$, the rightmost letter in the reduced form of $\g_n$ (see \Cref{sec:reduced}) is in $\G_A$,
 \item or, for all $n\in \N$, the rightmost letter in the reduced form of $\g_n$ is in $\G_B$.
\end{enumerate}
Since the two cases differ by relabelling, we make the additional assumption that (i) holds. By the assumption $\g_n \to 1$, we have
 \begin{equation}\label{eqn:gkpk2}
  \g_n B \ra B, 
 \end{equation}
 i.e., the Hausdorff distance between $\g_n B$ and $B$ converges to zero as $n\to \infty$.

 Since for all $b\in B$, $\lim_{n\ra\infty} \g_n b = b$, we must have the following: For all large $n\in\N$, the (nontrivial) leftmost letter  of $\g_n$ is  
 $\beta_n\in \G_B$. 
 In particular, for all large $n\in\N$,
 \begin{equation}\label{eqn:gkpk}
  \beta_{n}^{-1} \g_n B \subset  A.
 \end{equation}
 
\setcounter{case}{0}
\begin{case}
 Suppose that the sequence $( \beta_{n})$ is bounded in $G$.
 After extraction, we may assume that $\beta_{n} \equiv \beta\in\G_B \setminus \{1\}$, i.e., $( \beta_{n})$ is a constant sequence.
By \eqref{eqn:gkpk}, for all large $n\in\N$,
\[
 \g_n B \subset  \beta A \subset \Int B \subset B.
\]
In particular, any accumulation point in $\Ft$ of the sequence of subsets $(\g_n B)$ is contained in $\beta A$, which is {properly} contained in $B$.
This contradicts \eqref{eqn:gkpk2}.
\end{case}
 
\begin{case}
 Suppose now that the leftmost letter sequence $( \beta_{n})$ associated to $(\g_n)$ is unbounded in $G$.
Since the sequence $( \beta_{n})$ comes from $\G_B$, and $\G_B$ is $\tmod$-regular,
after  extraction of the sequence, we obtain that there exist points $b_\pm\in\Lt(\G_B)$ such that
\[
 \beta_{n}\vert_{C(b_-)} \ra {b_+}, \quad \text{as } n\ra\infty,
\]
uniformly on compact subsets.
Recall that, by  \Cref{lem:Lt}, $\Lt(\G_B) \subset B$.

Since $A$ and $B$ are antipodal to each other and $b_-\in B$ (see \Cref{lem:Lt}), we obtain that $A \subset C(b_-)$; cf. \eqref{def:cell}.
Then, by \Cref{prop:cont_reg}, $\beta_{n} A \ra b_+$ as $n\ra\infty$. 
Moreover, by \eqref{eqn:gkpk},
\[
 \g_n B \subset \beta_{n} A, \quad \forall n\gg 1.
\]
Hence, we also have that $\g_n B \ra b_+$ as $n\ra\infty$. 
This contradicts \eqref{eqn:gkpk2}.
\end{case}

Combining the above two cases, we complete the proof of this proposition.
\end{proof}

\section{Regularity}\label{sec:regularity}

The main result of this section stated below shows that the subgroup $\G$ in the conclusion of \Cref{thm:main} is $\tmod$-regular.

\begin{theorem}[Regularity]\label{thm:regularity}
 Let $\G_A$ and $\G_B$ be $\tmod$-regular subgroups of $G$.
 Suppose that $(\G_A,\G_B)$ admits a ping-pong pair $(A,B)$ in $\Ft$.
 If  $A$ is antipodal to  $B$, then $\G \coloneqq \inp*{\G_A}{\G_B} < G$ is a $\tmod$-regular subgroup.
\end{theorem}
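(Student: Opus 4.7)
The plan is to argue by contradiction via \autoref{prop:regulairty_metric}. Suppose $(\g_n)$ is a $\tmod$-irregular sequence of distinct elements in $\G$; I will seek a subsequence satisfying $\diam(\g_n B)\to 0$, which would force that subsequence to be $\tmod$-regular and contradict irregularity. After extraction (and possibly swapping the roles of $A$ and $B$), I may assume every $\g_n$ has its initial letter in $\G_A$, so by the ping-pong property $\g_n B\subset A\cup B$ for all $n$.

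The core move is to peel off the terminal letter $g^n$ of $\g_n$, writing $\g_n=g^n\cdot w_n$, and to extract further so that $g^n$ lies in a fixed factor (say $\G_B$; the symmetric case $g^n\in\G_A$ is analogous). If $(g^n)$ is unbounded in $G$, the $\tmod$-regularity of $\G_B$ together with \autoref{prop:cont_reg} lets me extract a pure subsequence with $g^n|_{C(b_-)}\to b_+$ uniformly on compacts, for some $b_\pm\in\Lt(\G_B)$. By \autoref{lem:Lt} we have $b_\pm\in B$, so the antipodality of $A$ relative to $B$ forces $A\subset C(b_-)$, and hence $\diam(g^n A)\to 0$. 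Since $w_n$ has initial letter in $\G_A$ and, by alternation, terminal letter in $\G_A$, iterated ping-pong gives $w_n B\subset A$. Therefore $\g_n B=g^n(w_n B)\subset g^n A$ has diameter tending to $0$, the desired contradiction.

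If instead $(g^n)$ is bounded, I extract a constant value $g^n=c$ and replace $\g_n$ by $\g^{(1)}_n := c^{-1}\g_n$; by the left-multiplication invariance of regularity (\autoref{foot:regular}), $(\g_n)$ is $\tmod$-regular iff $(\g^{(1)}_n)$ is. Each such iteration reduces $\rel$ by one. If $\rel(\g_n)$ stays bounded along a subsequence, then finitely many peels reduce the problem either to a constant word (forcing the distinct $\g_n$ to coincide, a contradiction) or to a sequence of distinct elements of a single factor $\G_A$ or $\G_B$, whose $\tmod$-regularity is hypothesized --- again contradicting irregularity.

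The principal obstacle is the case $\rel(\g_n)\to\infty$ with every peel producing a bounded letter. A diagonal extraction then yields $\g_n=c_1 c_2\cdots c_{k_n}$ for a single infinite reduced word $c_1 c_2\cdots$ alternating between the two factors, with $k_n\to\infty$. For each fixed $m$, the ping-pong inclusions confine $\g_n B\subset c_1\cdots c_m(X_m)$ for all sufficiently large $n$, where $X_m\in\{A,B\}$ is dictated by the factor of $c_{m+1}$. To finish I will invoke the singleton-intersection property for such nested alternating images --- the key lemma \autoref{lem:int_singleton} announced in the introduction --- to conclude that $\bigcap_m c_1\cdots c_m(X_m)$ is a single point. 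Then $\g_n B$ converges in Hausdorff topology to a singleton, and \autoref{prop:regulairty_metric} yields the required contradiction.
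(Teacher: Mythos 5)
Your reduction has the right skeleton and in fact mirrors the paper's own final assembly step (the paper extracts a common \emph{initial} subword $\omega_n$ and uses $\g_n^{-1}B\subset\omega_n^{-1}B$; you extract a common terminal subword $c_1\cdots c_m$ and use $\g_nB\subset c_1\cdots c_m(X_m)$ --- mirror images of one another), and your handling of the bounded-relative-length case and of the case of an unbounded peeled letter is correct. But there is a genuine gap at exactly the point you label the principal obstacle: you finish by invoking \autoref{lem:int_singleton}, and that lemma is not an available prior result. In the paper it is stated and proved \emph{inside} the proof of \autoref{thm:regularity}, and its proof begins by establishing that the alternating sequence $(\omega_n^{-1})$ is $\tmod$-regular via \autoref{claim:two} (regularity of ``special'' sequences) --- a special case of the very statement being proved. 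So, taken as a self-contained argument, your proposal is circular where it matters most. The singleton-intersection property is the technical core of the theorem, and proving it is where all the heavy machinery enters: if a special sequence were not $\tmod$-regular it would contain an $\emod$-pure subsequence with $\tmod\not\subset\emod$; one then uses the limit map $\phi:C_\Fu(\eta_-)\to\St(\eta_+)$ of \autoref{prop:KL9.5}, the stabilization $D_{n+1,n}\to 0$ of \autoref{lem:stabilzes}, the connectivity of stars (\autoref{lem:connected fibers}) and the fact that no star of the relevant type fits inside an antipodal set (\autoref{thelemma}) to produce points of $\St(\g_n\eta)\cap\partial\widetilde A$ that are forced arbitrarily close to a compact subset of $\Int\widetilde A$, a contradiction. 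None of this appears in your proposal, and without it the claim that $\bigcap_m c_1\cdots c_m(X_m)$ is a singleton is unsupported; as the introduction warns, nontrivial elements of $\G_A$ need not contract uniformly on $B$, so there is no cheap route to this convergence.

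Two smaller points. The diagonal extraction does not give $\g_n=c_1\cdots c_{k_n}$ exactly; it only guarantees that $c_1\cdots c_m$ is a terminal subword of $\g_n$ for all large $n$, with an uncontrolled remainder. Your inclusion $\g_nB\subset c_1\cdots c_m(X_m)$ survives because the remainder still has its initial letter in $\G_A$ and so maps $B$ into $X_m$, but the claim should be stated that way. Also, to apply \autoref{prop:regulairty_metric} you need $(\g_n)$ unbounded in $G$, which follows from distinctness together with the discreteness of $\G$ (\autoref{prop:discrete}) and should be said; and in the unbounded-terminal-letter case the subsequence you extract is a flag-convergent one (to which \autoref{prop:cont_reg} applies), not a ``pure'' one --- purity is the opposite of what you want there.
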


In special the case $G = \isom(\H^n)$, the regularity of subgroups of $G$ is equivalent to discreteness.
Recall that, for general semisimple Lie groups $G$, we have already established the discreteness of $\G$ in the conclusion of the above result; see \Cref{prop:discrete}.
However, if $G$ is of higher rank (e.g., $G = \PSL(3,\R)$), then regularity of subgroups is a much stronger property than just discreteness.
Therefore, \Cref{thm:regularity} is nontrivial when $G$ is of higher rank.

Before discussing the proof of \Cref{thm:regularity}, we introduce the following definition:

\begin{definition}
A sequence of distinct elements of $G$ is  {\em $\tmod$-irregular}  if it contains no $\tmod$-regular subsequences. 
\end{definition}

Note that a discrete subgroup $\G< G$ is $\tmod$-regular if and only if it contains no $\tmod$-irregular sequences. 

\Cref{thm:regularity} is a key step in the proof of our main theorem (\Cref{thm:main}) and the proof of it occupies the rest of this section.
Here we lay out our plan for the proof of \Cref{thm:regularity}. The main technical step in the proof is to show that a certain kind of sequences in $\G$ (the resulting subgroup in the conclusion of \Cref{thm:regularity}), which are called {\em special}, are $\tmod$-regular; see \Cref{claim:two}.
The proof of this lemma relies upon several results established in \Cref{sec:flag_var,sec:pure,sec:regulairty}.
An immediate consequence of this lemma is that {\em alternating} sequences in $\G$ are $\tmod$-regular, which is then used (with the help of \Cref{prop:cont_reg}) to show that  certain sequences of  nested images of the subsets $A, B$ under alternating sequences have singleton intersections; see  \Cref{lem:int_singleton} for a precise statement.
With this knowledge, we finish the proof of \Cref{thm:regularity} by contradiction:
Assuming $\G$ is not $\tmod$-regular, we may obtain a $\tmod$-irregular sequence $(\g_n')$ in $\G$.
Using the $\tmod$-irregularity property, we are able to find a subsequence $(\g_n)$ of $(\g_n')$ and a special sequence $(\omega_n)$ in $\G$ such that for all $n\in\N$, $\omega_n$ is a rightmost subword of $\g_n$.
Then, with the help of \Cref{lem:int_singleton}, we can easily extract a subsequence of $(\g_n^{-1})$ under which the images of some (fixed) compact subset of $\Ft$ with nonempty interior converge to a point, which would imply (by \Cref{prop:regulairty}) that this subsequence (hence, its inverse sequence, which is a subsequence of the original sequence $(\g_n')$) is $\tmod$-regular; see \Cref{lem:final}. This would be a contradiction to the assumption of $\tmod$-irregularity of $(\g_n')$.

We now discuss our proof in detail:
 
 \begin{proof}[Proof of \Cref*{thm:regularity}]
 By \Cref{prop:discrete}, we know that $\G$ is discrete.
 Thus, we have to show that $\G$ does not contain any $\tmod$-irregular sequences consisting of distinct elements.
 We first show that such 
 sequences in $\G$ cannot have uniformly bounded relative length (see \Cref{sec:reduced} for the definition).

\begin{lemma}\label{lem:bdd_length}
 If a  sequence of distinct elements of $\G$ has uniformly bounded relative length, then it  is $\tmod$-regular.
\end{lemma}

\begin{proof}
Let $(\g_n)$ be such a sequence.
After passing to a subsequence, we can (and will) assume that there exists $N\in\N$ such that, for all $n\in \N$, $\rell(\g_n) = N$.
If $N = 1$, then clearly the sequence lies in $\G_A \cup \G_B$, and hence it is $\tmod$-regular (since both subgroups $\G_A, \G_B$ of $G$ are assumed to be $\tmod$-regular). 

We prove the claim by an induction on $N$.
Suppose that any sequence in $\G$ whose relative length is constant and $<N$ is $\tmod$-regular.

Let $(\g_n)$ be a sequence such that, for all $n\in\N$, $\rell(\g_n) = N$.
Deleting the leftmost letter $\delta_n \in \G_A \cup \G_B$ from the reduced form of each element in $(\g_n)$, we obtain a new sequence $(\g_n')$, i.e., for each $n\in\N$, $\g_n = \delta_n \g_n'$.
We consider two possibilities.

\setcounter{case}{0}
\begin{case}
 Suppose that $\{\delta_n \mid n\in \N\}$ is finite. In this case, using the induction hypothesis $(\g_n')$ and, consequently $(\g_n)$, is  
$\tmod$-regular (see the discussion in \Cref{sec:conreg}). 
\end{case}

\begin{case}\label{lem3.3case2}
 Thus, we now assume that $\{\delta_n \mid n\in \N\} \subset \G_A \cup \G_B$ infinite. 
Then, after passing to a subsequence of $(\g_n)$, $(\delta_n)$ is $\tmod$-regular and, either $(\delta_n)$ is a sequence in $\G_A$, or a sequence in $\G_B$. It suffices to consider the former case, i.e., $(\delta_n)$ is a sequence in $\G_A$ (the other case is obtained by relabelling). Moreover, after further extraction of $(\g_n)$, we also assume that the rightmost letters of the reduced form of all the elements of $(\g_n')$ come from either $\G_A$ or $\G_B$; suppose that all the rightmost letters are in $\G_A$ (again, the other case when all the rightmost letters are in $\G_B$ is similar, and we skip that case).
Since $(\delta_n)$ is $\tmod$-regular, after passing to a subsequence, $\delta_n^{\pm1} \raf d_\pm \in A$.
Since $B\subset C(d_-)$, we obtain
\[
 \g_n (B) \subset \delta_n(B) \ra d_+, \quad \text{as } n\ra\infty.
\]
Using  \Cref{prop:regulairty}, the above shows that $(\g_n)$ is $\tmod$-regular.\qedhere
\end{case}
\end{proof}

By the above lemma, our study of $\tmod$-irregular sequences $(\g_n)$ in $\G$ is reduced to the case when $(\g_n)$ has unbounded relative length.
In particular, if $(\g_n)$ is $\tmod$-irregular, then
after passing to a subsequence,
we further assume that
\begin{equation}\label{eqn:length_increasing}
 \rell(\g_n) < \rell(\g_{n+1}),\quad
 \forall n\in\N.
\end{equation}

We next study the $\tmod$-regularity property of a certain kind of sequences which we call {\em special}: A sequence $(\g_n)$ in $\G$ is called {\em special} if it satisfies \eqref{eqn:length_increasing}, and, for all $n\in\N$,  $\g_n$ is a rightmost subword of $\g_{n+1}$ (see \Cref{sec:reduced} for the definition).

\begin{lemma}\label{claim:two}
 Special sequences in $\G$ are $\tmod$-regular.
\end{lemma}

\begin{proof}
Let $(\g_n)$ be a special sequence.
 Suppose that the rightmost  and  leftmost letters of the reduced form of each element $\g_n$ in the sequence are in $\G_A$ (there are three more possibilities, and the analysis in each case is similar). 
 After extraction, we reduce to one of the following two cases:
 
 \setcounter{case}{0}
 \begin{case}\label{case:one}
Assume that the leftmost letter sequence $(\alpha_n)$ of $(\g_n)$ is unbounded.
In this case, after passing to a subsequence of $(\g_n)$, we assume that the leftmost letter sequence $(\alpha_n)$ of  $(\g_n)$ flag-converges:
\[
 \exists a_\pm \in A \text{ such that }
 \alpha_n^{\pm1}\raf a_\pm, \quad \text{as } n\ra\infty. 
\]
Since the $B$ is antipodal to $a_-$,  we deduce that
\[
 \gamma_n(B) \subset \alpha_n(B) \ra a_+, \quad \text{as } n\ra\infty. 
\]
Applying \Cref{prop:regulairty}, we get that $(\g_n)$ is $\tmod$-regular.
\end{case}
 
\begin{case}
 Next, we look at the complementary case: For all $n\in \N$, the leftmost letters of the reduced form of all $\g_n$ come from a finite subset $S\subset \G_A$.

 Note that, since $(\g_n)$ is special, for each $n\in\N$, the rightmost letter  of $\g_{n+1} \g_n^{-1}$ is in $\G_B$,  and
\begin{equation}\label{eqn:nesting_case1}
\g_{n+1} \g_n^{-1} \widetilde A \subset S \widetilde B
 \subset \Int\widetilde A,
\end{equation}
where
\[
 \widetilde A = \bigcup_{a\in A} \St(a), \quad 
 \widetilde B =\bigcup_{b\in B} \St(b).
\]
Note that since $S\widetilde B$ is a union of a finite number of compact sets, the subset $S\widetilde B$ is compact.
We observe that $(\widetilde A, \widetilde B)$ is a ping-pong pair for  $(\G_A,\G_B)$ in $\Fs$.

Suppose that $(\g_n)$ is not $\tmod$-regular.
After extraction, the sequence $(\g_n)$ is $\emod$-pure, and
$
 \g_i^{\pm1} \raf \eta_\pm,
$
for some $\eta_\pm \in {\rm Flag}(\pm\emod)$,
where $\emod \subset \smod$ is a face such that 
\begin{equation}\label{eqn:containment}
\tmod \not\subset \emod,
\end{equation}
see \Cref{rem:pure}.
Recall from \Cref{sec:pure} that, after further extraction of $(\g_n)$, there exists a surjective algebraic map $\phi : C_\Fu(\eta_-) \ra \St(\eta_+)$ such that
$
  \g_k\vert_{C_\Fu(\eta_-)} \ra \phi 
$
uniformly on compacts.
See \Cref{prop:KL9.5}.

We pick a point $z\in C_\Fu (\eta_-) \cap \widetilde A$.\footnote{Since $\widetilde A$ has nonempty interior,  $C_\Fu (\eta_-) \cap \widetilde A\ne \emptyset$.}
Then, $z$ is contained in a subvariety
$\St(\eta)$, for some (unique) $\eta\in C(\eta_-)$.
Since, for all $n\in\N$, $\g_{n}(z) \in  S \widetilde B$, and $S \widetilde B \subset \Int \widetilde A$,
it follows that
$\St(\g_{n} \eta)$ 
intersects the interior of $\widetilde A$.

Let us fix a background distance function $d$ on $\Fs$ which is compatible with the manifold topology.
By \Cref{lem:stabilzes},
\begin{equation}\label{eqn:stabilzes}
 D_{n+1,n} 
 =  \max_{x\in \St(\g_{n}\eta)} d(\g_{n+1} \g_{n}^{-1} x, x)
 \ra 0,
 \quad \text{as } n\ra \infty.
\end{equation}

Next, we observe that, for all $n\in\N$, $\St(\g_{n}\eta) \not\subset \widetilde A$: For if $\St(\g_{n}\eta) \subset \widetilde A$, then we must have that $\pi_{\tmod} (\St(\g_{n}\eta)) \subset A$, where $\pi_\tmod$ is the projection map given by \eqref{eqn:proj}.
In this case, in view of \eqref{eqn:containment}, 
we obtain a contradiction with  \Cref{thelemma}. 

Since  $\St(\g_{n}\eta)$ is connected (see  \Cref{lem:connected fibers}), 
the preceding paragraph ensures that the intersections $\St(\g_{n}\eta) \cap \partial\widetilde A$ are nonempty, where $\partial \widetilde A$ denotes the frontier of the subset $\widetilde A$ of $\Fs$. 
For each $n\in \N$, choose $x_n  \in \St(\g_{n}\eta)  \cap \partial\widetilde A$.
By \eqref{eqn:stabilzes} and the triangle inequality, we have
\begin{equation}\label{eqn:accumulates}
 \lim_{n\ra\infty} d(\g_{n+1} \g_{n}^{-1} x_n , \partial\widetilde A) = 0.
\end{equation}
However, since $x_n \in \widetilde A$, by \eqref{eqn:nesting_case1},
the quantity
$d(\g_{n+1} \g_{n}^{-1} x_n , \partial\widetilde A)$
must be uniformly (over $n$) bounded below by a positive number.
This contradicts \eqref{eqn:accumulates}.
\end{case}

Combining the above two cases, we complete the proof of the lemma. 
\end{proof}

A sequence $(\omega_n)$ in $\G$ is called {\em alternating} if, there exist sequences $(\alpha_n)$ in $\G_A \setminus \{1\}$ and $(\beta_n)$ in $\G_B \setminus \{1\}$ such that
\begin{align}\label{eqn:specialform}
\begin{split}
\text{Type A} \quad &: \quad \text{Either}, ~\forall n\in\N, ~\omega_n = \alpha_1\beta_1\dots \alpha_{n-1}\beta_{n-1}\alpha_n,\\
\text{Type B} \quad &: \quad \text{Or}, ~\forall n\in\N,~ \omega_n = \beta_1\alpha_1\dots \beta_{n}\alpha_{n}.
\end{split}
\end{align}
Applying $\omega_n$ to $B$, we obtain a
sequence of compact subsets $(\omega_n B)$ of $\Ft$.
Note that, for all $n\in\N$, $\omega_{n+1} B = \omega_n \beta_n(\alpha_{n+1} B) \subset \omega_n (\beta_n A) \subset \omega_n (B)$; cf. item (ii) in \Cref{defn:ping-pong}.
Thus, we obtain a nested sequence of compact subsets
\begin{equation}\label{eqn:nested}
  \omega_1 B \supset \omega_2 B \supset \cdots.
\end{equation}
Moreover, by definition of being alternating, for all $n\in\N$, $\omega_n$ is a leftmost subword of $\omega_{n+1}$, and therefore, $(\omega_n^{-1})$ is special.

\begin{lemma}\label{lem:int_singleton}
If $(\omega_n)$ is alternating, then $(\omega_n)$ is $\tmod$-regular and intersection 
 \[\bigcap_{i\in\N} \omega_n B\] is singleton. In particular, as $n\ra\infty$, $\omega_n B$ converges to a point in $\Ft$.
\end{lemma}

\begin{proof}
We first assume that $(\omega_n)$ is of type A.

The first claim about $\tmod$-regularity follows directly by the observation that $(\omega_n^{-1})$ is special, and then by applying \Cref{claim:two} to conclude that $(\omega_n^{-1})$ is $\tmod$-regular.
Hence, $(\omega_n)$ is also $\tmod$-regular, see \Cref{foot:regular}.

 By $\tmod$-regularity, there exists a subsequence, $(\omega_{n_k})$ of $(\omega_n)$ and $\tau_\pm \in \Ft$ such that $\omega_{n_k}^{\pm1} \raf \tau_\pm$. By \Cref{prop:cont_reg}, as $k\ra\infty$,
 \[
  \omega^{\pm1}_{n_k} \vert_{C(\tau_\mp)} \ra \tau_\pm, \quad
  \text{uniformly on compacts.}
 \]
 Because of the special form of $\omega_n$ given in \eqref{eqn:specialform}, we observe  that $\tau_\pm\in A$: For if $\hat\tau\in B$ is any point antipodal to the pair $\tau_\pm$, then, for all $k\in\N$, $\omega_{n_k} \hat\tau\in \omega_{n_k} B \subset A$, where the last inclusion follows from the fact that $(\omega_n)$ is of type A.
 In particular, $\omega_{n_k}\hat\tau \in A$.
 Furthermore, since $\omega_{n_k} \hat\tau \ra \tau_+$ and $A$ is compact, we obtain that $\tau_+\in A$. 
 On the other hand, since we also have $\omega_{n_k}^{-1} B \subset A$, 
  applying a similar argument, we get that $\tau_-\in A$ as well.
 
 In particular, $B\subset C(\tau_-)$.
 Hence, $\omega_{n_k} B \to \tau_+$, as $k\ra\infty$.
 The lemma then follows by the nesting  in \eqref{eqn:nested}.
 
 \medskip
 Next, we consider the complementary case that $(\omega_n)$ is of type B.
 However, this case follows from the previous one by observing that $(\beta_1^{-1} \omega_n)$ is of type A, where $\beta_1\in\G_B$ is the common leftmost letter of the reduced form of the elements in $(\omega_n)$. 
\end{proof}

Now we finish the proof of  \Cref{thm:regularity}.
Suppose, to the contrary, that $(\g_n')$ is a $\tmod$-irregular sequence in $\G$.
We can (and will) assume that $(\g_n')$ satisfies   \eqref{eqn:length_increasing}, cf. \Cref{lem:bdd_length} above.

We construct two sequences $(\omega_n)$ and $(\g_n)$ in $\G$, such that $(\omega_n)$ is special, $(\g_n)$ is a subsequence of $(\g_n')$, and for each $n\in\N$, $\omega_n$ is a rightmost subword of $\g_n$:
After passing to a subsequence of $(\g_n')$, we assume that the rightmost letters $\alpha_n$ of the reduced form of the elements of $(\g_n')$ are all in either  $\G_A$ or $\G_B$. We assume that $\alpha_n \in \G_A$ for all $n\in\N$; the other possibility is analyzed by relabelling.
Note that the rightmost letter sequence $(\alpha_n)$ must be bounded in $G$ because, otherwise, by \Cref{case:one} in the proof of \Cref{claim:two}, $({\g'_n}^{-1})$, and hence $(\g'_n)$, would contain a $\tmod$-regular sequence.
Thus, $(\g'_n)$ has a subsequence $(\g'_{n_i})$ such that the rightmost letters of the reduced form of $\g'_{n_i}$'s are all the same, say all these are equal to $\alpha_1\in\G_A$.
Set $\omega_1 \coloneqq \alpha_1$ and $\g_1 \coloneqq \g_{n_1}'$.
Applying a similar argument (to the sequence $(\g'_{n_i}\alpha_1^{-1})_{i\in\N}$), we obtain a further subsequence $(\g'_{n_{i_k}})$ of $(\g'_{n_i})$ such that second letters from the right in the reduced form of $\g'_{n_{i_k}}$'s are all the same, say all these are equal to $\beta_1\in\G_B$.
Set $\omega_2 \coloneqq \beta_1\alpha_1$ and $\g_2 \coloneqq \g'_{n_{i_1}}$.
Proceeding inductively, we construct a special sequence $(\omega_n)$ in $\G$ and a subsequence of $(\g_n)$ of $(\g_n')$ such that
\[
 \omega_n \text{ is an rightmost subword of } \g_n, \quad \forall n\in\N.
\]
In particular, for all $n\in\N$, $\g_n = \delta_n \omega_n$, where $\delta_n = \g_n \omega_n^{-1}$ is a leftmost subword of $\g_n$.
After an extraction of $(\g_n)$, we can assume that the leftmost letters of the reduced form of the elements of $(\delta_n)$ all come from the same group, say $\G_A$. 
Moreover, by the construction, the sequence $(\omega^{-1}_{2n+1})_{n\in\N}$ is type A alternating (see \eqref{eqn:specialform}).

\begin{lemma}\label{lem:final}
 $(\g_{2n+1})$ is a $\tmod$-regular sequence.
\end{lemma}
\begin{proof}
 We will prove the equivalent statement: $(\g_{2n+1}^{-1})$ is a $\tmod$-regular subsequence, see  \Cref{foot:regular}. 
 We observe that $\g_{2n+1}^{-1}$ has the following form:
 \[
  \g_{2n+1}^{-1} = \omega_{2n+1}^{-1} \delta_{2n+1}^{-1}
  = \underbrace{\alpha_1^{-1}\beta_1^{-1} \cdots \alpha_{n+1}^{-1}}_{\omega_{2n+1}^{-1}} \underbrace{\beta^{(2n+1)}_1 \alpha^{(2n+1)}_1 \cdots \alpha^{(2n+1)}_q}_{\delta_{2n+1}^{-1}}
 \]
 Therefore,
 \[
  \g_{2n+1}^{-1} B \subset \omega_{2n+1}^{-1} B.
 \]
 Since $(\omega^{-1}_{2n+1})_{n\in\N}$ is type A alternating,
 by \Cref{lem:int_singleton}, $\omega_{2n+1}^{-1} B$ converges to a point $\tau_+ \in\Ft$.
 Therefore, by the above inclusion, we also obtain 
 \[
  \g_{2n+1}^{-1} B \ra \tau_+,
  \quad \text{as } n\ra\infty.
 \]
 By \Cref{prop:regulairty}, $(\g_{2n+1}^{-1})$ is $\tmod$-regular subsequence.
\end{proof}

Hence, the sequence $(\g_n')$ contains a $\tmod$-regular subsequence, namely $(\g_{2n+1})$.
This is a contradiction!
\end{proof}

\section{Boundary embedding}\label{sec:boundary}

In this section, we work under the hypothesis of \Cref{thm:main}, i.e., we assume that $\G_A$ and $\G_B$ are $\tmod$-Anosov subgroups admitting a ping-pong pair $(A,B)$ in $\Ft$ such that $A$ and $B$ are antipodal to each other.
By \Cref{lem:Lt}, we know that $\Lt(\G_A) \subset A$, and $\Lt(\G_B) \subset B$.
Moreover, since $\tmod$-Anosov subgroups of $G$ are $\tmod$-regular (see \Cref{sec:discrete_groups}), by \Cref{thm:regularity}, $\G \coloneqq \inp*{\G_A}{\G_B} \cong \G_A * \G_B$ is a $\tmod$-regular subgroup of $G$.

Anosov subgroups are intrinsically word-hyperbolic (see \Cref{def:AE}). Hence, the free product $\G_A * \G_B$ is also word-hyperbolic.
We fix a finite generating set $S = S_A \cup S_B$ for $\G_A * \G_B$, where $S_A$ and $S_B$ generate $\G_A$ and $\G_B$, respectively.
Using this choice of $S$ we define the Cayley graph of $\G_A * \G_B$.
We equip $\G_A * \G_B$ with the corresponding word-metric. 

The goal of this section is to construct a $\G$-equivariant antipodal embedding
\[
 \xi : \vb\G \ra A\cup B\subset \Ft,
\]
such that
\begin{equation}\label{eqn:restrict}
   \xi_A = \xi\vert_{\vb\G_A} \quad\text{and} \quad
\xi_B =  \xi\vert_{\vb\G_B},
\end{equation}
where the homeomorphisms $\xi_A :  \vb\G_A \ra \Lt(\G_A)$ and $\xi_B : \vb\G_B \ra \Lt(\G_B)$ are the asymptotic embeddings for $\G_A$ and $\G_B$, respectively.

We first recall that there exists a $\G$-invariant decomposition
\[
 \vb \G = \G (\vb \G_A \sqcup \vb \G_B) \,\sqcup\, \vb T
\]
where $T$ is the Bass--Serre tree of the free product $\G_A * \G_B$, see e.g. \cite{MS}.
For the notational convenience, we introduce the following notation: 
\[
\vi \coloneqq \G (\vb \G_A \sqcup \vb \G_B) \quad\text{and} \quad \vii  \coloneqq \vb T.
\]
We will first define the map $\xi$ separately on these two disjoint subsets $\vi$ and $\vii$. Then we will  piece them together and check that $\xi$ is an equivariant antipodal embedding.

\subsection{Construction of $\xi$}\label{sec:Construction} 
We define a map 
\begin{equation}\label{eqn:def_vi_map}
 \xi : \vi \to \Ft
\end{equation}
 as follows:
Let $\e\in\vi$ be arbitrary.
There exists some $\g\in\G$ such that $\g^{-1} \e \in \vb \G_A \sqcup \vb \G_B$.
Suppose, for instance, that\footnote{the other possibility that $\g^{-1} \e \in \vb \G_B$ can be treated by relabelling} $\g^{-1} \e \in \vb \G_A$.
Define
\[
 \xi(\e) = \g \,\xi_A(\g^{-1}\e).
\]
To check well-definedness, we note that such a $\g$ is unique up to right multiplications by elements of $\G_A$.
Thus, if $\g_1\in \G$ is another choice such that $\g_1^{-1} \e \in \vi$, then $\alpha = \g^{-1}\g_1 \in \G_A$.
Hence,
$\g_1\, \xi_A(\g_1^{-1}\e) = \gamma\alpha\, \xi_A(\alpha^{-1}\gamma^{-1}\e) = \gamma\, \xi_A(\alpha\alpha^{-1}\gamma^{-1}\e) =\gamma\,\xi_A (\gamma^{-1} \e)$.
Moreover, by the construction, the map $\xi : \vi \ra \Ft$ is $\G$-equivariant and satisfies \eqref{eqn:restrict}.

\begin{proposition}\label{lem:image}
Let $\e\in\vi$, and let $(\g_n)$ be any sequence in $\G_A * \G_B$ such that $\g_n \raC \e$ in the compactified (by adding the Gromov boundary) Cayley graph of $\G_A * \G_B$.
Then, viewing $(\g_n)$ as a sequence in $G$, $\g_n \raf \xi(\e)$.

Moreover, the image $\xi(\vi)$  is contained in $A\cup B$. 
\end{proposition}

\begin{proof}Recall that by  \Cref{lem:Lt}, 
$\Lt(\G_A)\subset A$ and $\Lt(\G_B)\subset B$. 

For the first part, it is enough to assume that $\e\in \vb \G_A \sqcup \vb \G_B$.
 Suppose that $\e\in\vb\G_A$ (the other case when $\e\in\vb\G_B$ is treated similarly).
 Since $\g_n \raC \e \in \vb\G_A$, possibly after disregarding the first few terms, the 
 leftmost letter sequence $(\alpha_n)$ of $(\gamma_n)$ comes from $\G_A$ and $\alpha_n \raC \e$.
 Since $\G_A< G$ is $\tmod$-regular and $B$ is antipodal relative to $\Lt(\G_A)$, $\alpha_n(B) \to \{\xi(\e)\}$ in $\Ft$.
 Now,  \Cref{prop:regulairty} implies that $\g_n \raf \xi(\e)$; see \Cref{lem3.3case2} in the proof of \Cref{lem:bdd_length} for a similar argument.

 We prove the second part by induction on the relative lengths of elements $\gamma\in \G=\G_A * \G_B$. 
If, say, $\e\in \vb \G_A$ and 
$\gamma\in \G_A \cup \G_B \setminus \{1\}$, then either $\gamma(\e)\in \Lt(\Gamma_A)\subset A$ (when $\gamma\in \Gamma_A$) or $\gamma(\e)\in B$ (since $\gamma(A)\subset B$ in this case). Inductively, assume that for each element $\gamma\in\G$ of the relative length $n-1$ and of the reduced form $\alpha_1\beta_1\cdots$, then $\gamma(\e)\in A$, while if $\gamma$ is of the reduced form $\beta_1 \alpha_2\cdots$, then $\gamma(\e)\in B$. Consider an element $\gamma=\alpha_1 \beta_1 \cdots$ of the relative length $n$. Then, by the induction assumption, 
$$
e'= \beta_1\alpha_2\cdots(\e)\in B. 
$$ 
Then $\gamma(\e) =\alpha_1(e')\in A$ by the assumption of \Cref{thm:main}. Similarly, if $\gamma=\beta_1 \alpha_2\cdots$, then 
$\gamma(\e)\in B$.
\end{proof}

To define a map $\xi : \vii \ra \Ft$, we use the following result.

\begin{proposition}\label{prop:vii_map}
Let $\e\in\vii$, and let $(\g_n)$ be any sequence in $\G_A * \G_B$ such that $\g_n \raC \e$ in the compactified Cayley graph of $\G_A * \G_B$.
 Then, there exists $\tau_+\in \Ft$ depending only on $\e$ such that in $\Ft$, $\g_n \raf \tau_+$.
\end{proposition}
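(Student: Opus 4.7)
The plan is to identify a candidate flag limit $\tau_+$ via a canonical ``prefix sequence'' associated to $\e$, and then to show that every sequence converging to $\e$ in the Cayley-graph compactification must flag-converge to this same $\tau_+$. Let the end $\e\in\vii = \vb T$ correspond to the infinite reduced word $s_1 s_2 s_3\cdots$, with $s_i$ alternating between $\G_A\setminus\{1\}$ and $\G_B\setminus\{1\}$; assume without loss of generality that $s_1\in \G_A$. Set $p_n := s_1 s_2\cdots s_n$, the partial products along the natural geodesic ray in $\G$ converging to $\e$; each $p_n$ is reduced with leftmost (terminal) letter $s_1$ fixed across $n$.

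I first identify $\tau_+$. The odd subsequence $(p_{2k+1})_{k\geq 0}$ fits the Type~A alternating form of \eqref{eqn:specialform}, so by \autoref{lem:int_singleton} it is $\tmod$-regular and $p_{2k+1}(B)$ Hausdorff-converges to a singleton $\{\tau_+\}$ with $\tau_+\in A$. For even indices, the inclusion $p_{2k}(A) = p_{2k-1}(s_{2k}(A))\subset p_{2k-1}(B)$ (using $s_{2k}\in\G_B\setminus\{1\}$ and the ping-pong hypothesis) forces $p_{2k}(A)\ra\{\tau_+\}$ Hausdorff as well. Hence $\diam(p_{2k+1}(B))\ra 0$ and $\diam(p_{2k}(A))\ra 0$, so \autoref{prop:regulairty_metric} and \autoref{prop:regulairty} give $p_n\raf\tau_+$ along the whole sequence.

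Now let $(\g_n)$ be an arbitrary sequence with $\g_n\raC\e$. Let $k_n$ be the length of the maximal common initial segment of the reduced words of $\g_n$ and $\e$; Cayley-graph convergence forces $k_n\ra\infty$, and we write $\g_n = p_{k_n}q_n$ with the concatenation in reduced form. By \autoref{thm:regularity}, $\G$ is $\tmod$-regular, so every subsequence of $(\g_n)$ has a flag-convergent further subsequence; it therefore suffices to show that every flag-accumulation point $\tau'_+$ of $(\g_n)$ equals $\tau_+$. Pass to $\g_{n_j}\raf\tau'_+$ with inverse attractor $\tau'_-$, and extract further so that the parities of $k_{n_j}$ and of $r_j := \rel(q_{n_j})$ stabilize.

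The crux is a ping-pong case analysis on the two parities: by the alternation of letter-subgroups in the reduced form of $q_{n_j}$ and the ping-pong inclusions on $A,B$, a short case-by-case check shows that there always exists a choice $S\in\{A,B\}$ with $q_{n_j}(S)\subset T$, where $T := B$ if $k_{n_j}$ is odd and $T := A$ if $k_{n_j}$ is even---precisely the set on which $p_{k_{n_j}}$ Hausdorff-contracts to $\tau_+$ from the previous step. Choose $p\in\Int(S)\cap C(\tau'_-)$, nonempty since $S$ has nonempty interior and $C(\tau'_-)$ is open and dense. Then \autoref{prop:cont_reg} gives $\g_{n_j}(p)\ra\tau'_+$, while simultaneously $\g_{n_j}(p) = p_{k_{n_j}}(q_{n_j}(p))\in p_{k_{n_j}}(T)\ra\{\tau_+\}$. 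Hence $\tau'_+ = \tau_+$, completing the argument. The main obstacle is precisely this combinatorial parity bookkeeping, namely verifying that no matter which configuration of parities the tail $q_{n_j}$ exhibits, at least one of $A$ or $B$ is mapped by $q_{n_j}$ into the correct target $T$ on which $p_{k_{n_j}}$ uniformly contracts to $\tau_+$.
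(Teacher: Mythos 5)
Your proof is correct and follows essentially the same route as the paper: both identify $\tau_+$ as the singleton $\bigcap_n p_{2n+1}B$ via \autoref{lem:int_singleton} applied to the alternating prefix sequence determined by $\e$, decompose an arbitrary sequence $\g_n\raC\e$ as (long common prefix)$\cdot$(remainder), and use the ping-pong nesting to squeeze the relevant images into $p_{k_n}(T)\ra\{\tau_+\}$. The only immaterial difference is the concluding step: the paper applies the regularity criterion \autoref{prop:regulairty} directly to the Hausdorff-convergent sets $\g_n(S)$, whereas you invoke \autoref{thm:regularity} and pin down the flag-accumulation points with a single test point via \autoref{prop:cont_reg}.
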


\begin{proof}
Consider a sequence $(\omega_n)$ in $\G_A * \G_B$ such that $\omega_n \raC \e$ and which is {\em alternating}, see \eqref{eqn:specialform}.
We remark that the alternating sequence lies  on the geodesic ray $\ell_\e$ in the Cayley graph of $\G_A*\G_B$ that emanates at the identity element and is asymptotic to $\e$.
Such an alternating sequence is {\em uniquely} determined by $\e\in\vii$.

\begin{figure}[h]
\centering
\begin{overpic}[scale=.5,tics=5]{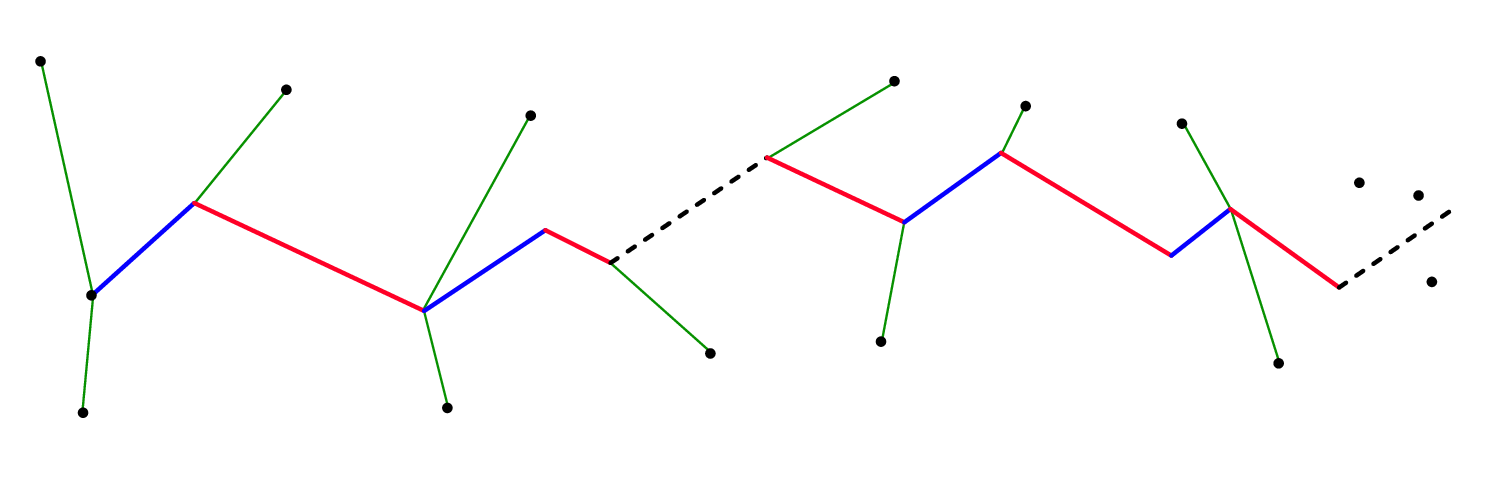}
\put(1,12){$1_\G$}
\put(97.5,18){\large$\e$}
\put(1.5,30){\small$\g_1$}\put(4.5,2.5){\small$\g_2$}\put(17.5,28){\small$\g_3$}\put(34,26.5){\small$\g_4$}\put(28.5,2.5){\small$\g_5$}\put(67,26.5){\small$\g_n$}\put(75,25.5){\small$\g_{n+1}$}\put(82,6){\small$\g_{n+2}$}
\put(6.5,17){\color{blue}\small$\alpha_1$}\put(32,12.5){\color{blue}\small$\alpha_2$}\put(62.5,17.5){\color{blue}\small$\alpha_{f(n)}$}
\put(20,16.5){\color{red}\small$\beta_1$}\put(37.5,17){\color{red}\small$\beta_2$}\put(71.5,19.5){\color{red}\small$\beta_{f(n)}$}
\put(13,23){\color{ForestGreen}\small$\delta_3$}\put(29,19.5){\color{ForestGreen}\small$\delta_4$}\put(64,23.5){\color{ForestGreen}\small$\delta_n$}\put(84.5,11){\color{ForestGreen}\small$\delta_{n+2}$}\put(80.5,21){\color{ForestGreen}\small$\delta_{n+1}$}
\end{overpic}
\caption{Fellow traveling ray}\label{fig:geodesic}
\end{figure}

Let $(\g_n)$ be a sequence as in the statement of the proposition.
Since $\g_n \raC\e$, the sequence $(\g_n)$ ``fellow travels'' the geodesic ray $\ell_\e$. 
See \Cref{fig:geodesic}.
More precisely, there exists a sequence $(\delta_n)$ in $\G_A*\G_B$ such that the following hold: 
\begin{enumerate}[label=(\roman*)]\itemsep0em
\item There exists a nondecreasing function $f: \N \ra \N$ such that $\lim_{n\ra\infty}f(n) = \infty$ and $n_0\in\N$ such that, for all $n\ge n_0$,
 $\g_n = \omega_{f(n)} \delta_n$.
\item For $n\ge n_0$, if $\delta_n$ is nontrivial and the leftmost letter of the reduced form of $\delta_n$ is in $\G_A$ (resp. $\G_B$), then the rightmost letter of the reduced form of $\omega_{f(n)}$ is in $\G_B$ (resp. $\G_A$). 
\end{enumerate}

We further assume that the sequence $(\omega_n)$ is of type A in \eqref{eqn:specialform} (type B sequences  can be analyzed in a similar way).
So, for $n\ge n_0$, $\g_n$ has the following form
\[
 \g_n = \underbrace{\alpha_1 \beta_1 \cdots \beta_{f(n)-1}\alpha_{f(n)}}_{\omega_{f(n)}} \underbrace{\vphantom{\alpha_1 \beta_1 \dots \beta_{f(n)-1}\alpha_{f(n)}}\beta_1^{(n)} \alpha_1^{(n)} \cdots}_{\delta_n}.
\]
Next, we split the sequence $(\g_n)$ into two disjoint subsequences: The first (resp. second) subsequence $(\g_n')$ (resp. $(\g_n'')$) consists of all elements of $(\g_n)$ whose rightmost letters in their reduced forms are in $\G_A$ (resp. $\G_B$).
Let 
\begin{equation}\label{eqn:def_map_vii}
 \tau_+ \coloneqq  \bigcap_{i\in\N} \omega_n B = \lim_{n\ra\infty} \omega_n B
\end{equation}
(cf. \Cref{lem:int_singleton}).
Then, applying \Cref{lem:int_singleton}, we get
\[
 \g_n'(B) \raf \tau_+, \quad \g_n''(A) \raf \tau_+, \quad \text{ as }n\ra\infty,
\]
and then \Cref{prop:regulairty} implies $\g_n' \raf\tau_+$ and $\g_n'' \raf\tau_+$. Compare with the proof of \Cref{lem:final}.
Hence, $\g_n\ra\tau_+$.
\end{proof}

Using \Cref{prop:vii_map}, we define a map 
\begin{equation}\label{eqn:def_vii_map}
  \xi: \vii \ra \Ft, \quad
 \e\mapsto \tau_+,
\end{equation}
 where $\tau_+$ is the unique point in $\Ft$ corresponding to $\e$ obtained by the proposition.
This map is $\G$-equivariant: Let $\g\in \G$ and $\e\in\vii$.
If $(\omega_n)$ is an alternating sequence converging to $\e$, then the sequence
$(\g\omega_n)$ converges to $\g\e$.
By \Cref{prop:vii_map} and the definition of $\tau_+$ in \eqref{eqn:def_map_vii}, 
\[
 \xi(\g \e) = \lim_{n\ra\infty} (\g\omega_n) B \,=\, \g \left(\lim_{n\ra\infty} \omega_n B\right) = \g \, \xi(\e).
\]

\begin{corollary}\label{cor:bd_map}
 The piecewise-defined map $\xi : \vb\G \ra \Ft$ from \eqref{eqn:def_vi_map} and \eqref{eqn:def_vii_map} is a $\G$-equivariant map satisfying \eqref{eqn:restrict}.
\end{corollary}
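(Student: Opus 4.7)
The plan is to verify three things in sequence: (a) the map $\xi$ is unambiguously defined on all of $\vb\G$, (b) the restriction condition \eqref{eqn:restrict} holds, and (c) the resulting map is $\G$-equivariant. Since all the substantial analytical work was done in the preceding two subsections, the proof is essentially a bookkeeping check that the pieces assemble cleanly.

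For (a), I would first record that, by the decomposition $\vb\G = \G(\vb\G_A \sqcup \vb\G_B) \sqcup \vb T$, the two domains $\vi$ and $\vii$ are disjoint, so the piecewise formula yields a single well-defined function from $\vb\G$ to $G/P_\tmod$. The separate well-definedness of each piece has already been established: on $\vi$, independence from the choice of $\g$ representing $\e = \g\e'$ was verified in the construction of \eqref{eqn:def_vi_map}, and on $\vii$, \autoref{prop:vii_map} asserts that $\xi(\e)$ depends only on $\e$ and not on the sequence chosen to approach it in the Cayley graph.

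For (b), I would simply take $\e \in \vb\G_A$ and apply definition \eqref{eqn:def_vi_map} with the admissible choice $\g = 1 \in \G$, yielding $\xi(\e) = 1\cdot \xi_A(1^{-1}\e) = \xi_A(\e)$; the analogous choice handles $\vb\G_B$. Thus $\xi|_{\vb\G_A} = \xi_A$ and $\xi|_{\vb\G_B} = \xi_B$ tautologically.

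For (c), I would verify equivariance piece by piece, using that the $\G$-action on $\vb\G$ preserves the decomposition $\vi \sqcup \vii$ (both subsets being $\G$-invariant by construction). On $\vii$, equivariance was explicitly shown in the paragraph following \eqref{eqn:def_vii_map}: given an alternating sequence $(\omega_n)$ approaching $\e$, the sequence $(\g\omega_n)$ approaches $\g\e$, so $\xi(\g\e) = \lim_{n\ra\infty}(\g\omega_n)B = \g\lim_{n\ra\infty}\omega_n B = \g\,\xi(\e)$. On $\vi$, given $\g_0 \in \G$ and $\e \in \vi$ with $\g^{-1}\e = \e' \in \vb\G_A$ (the other case is identical up to relabelling), one has $(\g_0\g)^{-1}(\g_0\e) = \e' \in \vb\G_A$, so $\xi(\g_0\e) = (\g_0\g)\,\xi_A(\e') = \g_0\cdot\g\,\xi_A(\g^{-1}\e) = \g_0\,\xi(\e)$. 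Combining the two pieces, $\xi$ is $\G$-equivariant on all of $\vb\G$. I do not anticipate any real obstacle; the only subtlety worth flagging is the tacit use of the fact that $\vii$ is $\G$-invariant (equivalently, that $\G$ cannot map a boundary point of $T$ into the orbit of a boundary point of a vertex stabilizer), which is standard for the Bass--Serre boundary.
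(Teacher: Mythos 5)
Your proposal is correct and matches the paper's treatment: the corollary is simply a summary of the preceding constructions, where well-definedness and equivariance on $\vi$ are checked inline in \autoref{sec:Construction}, well-definedness on $\vii$ is \autoref{prop:vii_map}, and equivariance on $\vii$ is the computation following \eqref{eqn:def_vii_map}. Your explicit bookkeeping (disjointness of $\vi$ and $\vii$, the choice $\g=1$ for \eqref{eqn:restrict}, and the piecewise equivariance check) is exactly the implicit content of the paper's argument.
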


\subsection{$\xi$ is antipodal}

In this subsection, we show that the map $\xi$ in \Cref{cor:bd_map} is antipodal: That is, if $\e,\hat\e \in\vb\G$ are distinct points, then $\xi(\e)$ is antipodal to $\xi(\hat\e)$ in $\Ft$.
In the proof,  we repeatedly use the fact that 
\begin{equation}\label{eqn:prsv_atpd}
 \text{ the action $G \acts \Ft$ preserves antipodality.}
\end{equation}

The discussion naturally divides into three mutually exclusive cases.

\setcounter{case}{0}
\begin{case}
 Suppose that $\e$ and $\hat\e$ both lie in $\vi$.
 By \eqref{eqn:prsv_atpd}, without loss of generality, we assume that $\e\in \vb\G_A \sqcup \vb\G_B$.
 If $\hat\e$ is also in $\vb\G_A \sqcup \vb\G_B$, then clearly $\xi(\e)$ is antipodal to $\xi(\hat\e)$ because $\xi_A \sqcup \xi_B : \vb\G_A \sqcup \vb\G_B \ra \Ft$ is an antipodal embedding.
 Thus, we assume that $\hat\e\not\in \vb\G_A \sqcup \vb\G_B$.
 Suppose further that $\e\in \vb\G_A$ (the other case $\e\in \vb\G_B$ can be analyzed similarly).
 Pick some $\g\in\G$ of {\em least} relative length such that  $\g\hat\e' = \hat\e$ for some $\hat\e' \in \vb\G_A \sqcup \vb\G_B$.

 Suppose that $\hat\e'\in \vb\G_A$. Then, by the assumption that $\g$ has the least relative length, the rightmost letter of the reduced form of $\g$ is in $\G_B\setminus\{1\}$.
If the leftmost letter of the reduced form of $\g$ is some element $\alpha\in\G_A \setminus \{1\}$, 
then we must have $\rell(\g) \ge 2$.
In this case, the second letter from the left of the reduced form of $\g$ is some element $\beta\in \G_B$, which is now the leftmost letter of the reduced form of $\alpha^{-1} \gamma$. 
Thus, $\alpha^{-1}\xi(\hat\e) = \alpha^{-1}\xi(\g\hat\e') = \alpha^{-1}\g \xi(\hat\e')  \in \beta A\subset B$ whereas $\alpha^{-1}\xi(\e)  \in\Lt(\G_A) \subset A$,
Hence, $\alpha^{-1}\xi(\hat\e)$ and $\alpha^{-1}\xi(\e)$ are antipodal, and consequently, by \eqref{eqn:prsv_atpd}, $\xi(\hat\e)$ and $\xi(\e)$ are antipodal.
Similarly, if the leftmost letter of the reduced form of $\g$ is in $\G_B \setminus \{1\}$, then we must have $\xi(\hat\e)\in B$, and since $\xi(\e)\in A$, $\xi(\hat\e)$ and $\xi(\e)$ are antipodal.

Now we assume that $\hat\e'\in \vb\G_B$.
Since $\g$ has the least relative length, the rightmost letter of the reduced form of $\g$ lies in $\G_A\setminus\{1\}$.
If the leftmost letter in the reduced form of $\g$ is some element $\alpha\in \G_A\setminus\{1\}$, then one may check that $\alpha^{-1}\xi(\hat\e) = \alpha^{-1}\gamma\xi(\hat\e')\in B$, which is antipodal to $\alpha^{-1}\xi(\e)  \in\Lt(\G_A) \subset A$.
Thus, by  \eqref{eqn:prsv_atpd}, $\xi(\hat\e)$ and $\xi(\e)$ are antipodal.
Else, if the leftmost letter in the reduced form of $\g$ is some element $\beta\in \G_B\setminus\{1\}$,
then $\rell(\g)\ge 2$ (since the rightmost letter lies in $\G_A\setminus\{1\}$).
In this case, by a similar reasoning as in the preceding paragraph, it follows that $\beta^{-1}\xi(\hat\e)\in A$ is antipodal to $\beta^{-1}\xi(\e)\in \beta^{-1} A \subset B$, which implies (again, by  \eqref{eqn:prsv_atpd}) that $\xi(\hat\e)$ and $\xi(\e)$ are antipodal.
\end{case}

\begin{case}\label{case:atpd_two}
 Suppose that $\e$ and $\hat\e$ both lie in $\vii$.
 Then $\e$ (resp. $\hat\e$) determines\footnote{See the beginning of the proof of \Cref{prop:vii_map}.} an alternating sequence $(\omega_n)$ (resp. $(\hat\omega_n)$) in $\G$.
 If the sequences $(\omega_n)$ and $(\hat\omega_n)$ are of different types (see \eqref{eqn:specialform}), then clearly,
 $\xi(\e)$ and $\xi(\hat\e)$ lie in different sets $A$ and $B$.
 Hence, $\xi(\e)$ and $\xi(\hat\e)$ are antipodal.
 
 Otherwise, the sequences $(\omega_n)$ and $(\hat\omega_n)$ have the same type, say type A.
 Let $N\in\N$ be the least number for which $\omega_N \ne \hat\omega_N$.
 Clearly, the sequences $(\omega_{N}^{-1}\omega_n)_{n\ge N+1}$ and $(\omega_N^{-1}\hat\omega_n)_{n\ge N+1}$ are still alternating, but of {\em different} types.
 Therefore, by the previous paragraph, $\omega_N^{-1}\xi(\e) = \xi(\omega_N^{-1}\e)$ and $\omega_N^{-1}\xi(\hat\e) = \xi(\omega_N^{-1}\hat\e)$
 are antipodal.
 Hence, by \eqref{eqn:prsv_atpd}, $\xi(\e)$ and $\xi(\hat\e)$ are antipodal.
\end{case}

\begin{case}
 Suppose that $\e \in\vi$ and $\hat\e\in\vii$.
 By \eqref{eqn:prsv_atpd}, without loss of generality, we assume that $\e\in \vb\G_A \sqcup \vb\G_B$.
 Suppose further that $\e\in \vb\G_A$; the other case $\e\in \vb\G_B$ can be analyzed by relabelling.
 Let $(\omega_n)$ be the alternating sequence in $\G$ determined by $\hat\e$.
 If $(\omega_n)$ is of type $B$, then $\xi(\hat\e) \in B$ and, hence, it is antipodal to $\xi(\e)$. Otherwise, if $(\omega_n)$ is of type $A$, then $\omega_1\in\G_A\setminus \{1\}$.
 In this case, by a similar argument as in \Cref{case:atpd_two}, $\omega^{-1}_1\xi(\e)$ is antipodal to $\omega^{-1}_1\xi(\hat\e)$ which, in conjunction with \eqref{eqn:prsv_atpd}, implies that $\xi(\e)$ is antipodal to $\xi(\hat\e)$.
\end{case}

Combining the above cases, we obtain the following:

\begin{proposition}\label{prop:antipodal}
 The map $\xi : \vb\G \ra \Ft$ in \Cref{cor:bd_map} is antipodal.
 
 In particular, $\xi$ is injective.
\end{proposition}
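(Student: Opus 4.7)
My plan is to exploit the fact that the $G$-action on $G/P_\tmod$ preserves antipodality, which lets me prove the statement after pre-translating one of the two boundary points $\e, \hat\e$ by any convenient element of $\G$. Using the decomposition $\vb\G = \vi \sqcup \vii$, the analysis naturally splits into three mutually exclusive cases, and in each case the goal is to reduce the antipodality question to something visible from the ping-pong data $(A,B)$.

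\textbf{Both points in $\vi$.} By equivariance I translate so that $\e \in \vb\G_A \sqcup \vb\G_B$. If $\hat\e$ also lies there, antipodality follows from (a)~the antipodality of each of the asymptotic embeddings $\xi_A$ and $\xi_B$, and (b)~the relative antipodality of $A$ and $B$ combined with the containments $\Lt(\G_A) \subset A$, $\Lt(\G_B) \subset B$ from \autoref{lem:Lt}. Otherwise, I choose $\g\in\G$ of \emph{least} relative length such that $\g^{-1}\hat\e \in \vb\G_A \sqcup \vb\G_B$, and I split on whether the terminal letter of $\g$ lies in $\G_A$ or in $\G_B$. The ping-pong inclusions then push $\xi(\hat\e)$ into one of $A, B$ while $\xi(\e)$ is trapped in the opposite set (or in the limit set sitting inside the opposite set), and relative antipodality closes the case.

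\textbf{Both points in $\vii$.} Here I use the canonical alternating sequence $(\omega_n)$ produced in \autoref{sec:Construction}: each $\e\in \vii$ corresponds to a unique geodesic ray from $1$ in the Cayley graph of $\G_A*\G_B$, and by \autoref{lem:int_singleton} we have $\xi(\e) = \bigcap_n \omega_n B$ or $\bigcap_n \omega_n A$ depending on the type in \eqref{eqn:specialform}. If the alternating sequences of $\e$ and $\hat\e$ have different types, their images lie in the disjoint sets $A$ and $B$, so antipodality is immediate. If they share a type, I locate the first index $N$ at which the two sequences disagree and translate both by $\omega_N^{-1}$; the shifted alternating sequences are then of different types, reducing to the previous subcase. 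I expect this ``first disagreement'' step to be the main technical pinch point: it is crucial that the alternating representation is canonical (determined by the geodesic ray in the Bass--Serre tree), because otherwise the change of type after translation would not be guaranteed.

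\textbf{Mixed case.} After translating so that $\e \in \vb\G_A \sqcup \vb\G_B$, I consider the initial letter $\omega_1$ of the alternating sequence for $\hat\e$. If $\omega_1$ lies in the group opposite to the one containing $\e$, then $\xi(\hat\e)$ already lies in the opposite set (by iterating the ping-pong inclusions, cf.\ \autoref{lem:image}), and antipodality is immediate. Otherwise I translate by $\omega_1^{-1}$ to reduce to the previous case. Injectivity of $\xi$ follows at once, since any two antipodal points in $G/P_\tmod$ are necessarily distinct.
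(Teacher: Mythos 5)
Your proposal is correct and follows essentially the same route as the paper: the same three-case split along $\vb\G = \vi \sqcup \vii$, the same use of the least-relative-length translate in the first case, and the same ``first disagreement index $N$'' trick with the canonical alternating sequences in the second and third cases. The only cosmetic difference is that the paper always writes $\xi(\e)=\bigcap_n \omega_n B$ (for both types, with the image landing in $A$ or $B$ according to the type), but this does not affect the argument.
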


\subsection{$\xi$ is an embedding}\label{sec:embedding} 

Finally, we prove that the map $\xi : \vb\G \ra \Ft$ in \Cref{cor:bd_map} is an embedding whose image lies in $A\cup B$.
Since $\xi$ is injective (see \Cref{prop:antipodal}), domain of $\xi$ is a compact space, and the codomain is Hausdorff, it is enough to show that $\xi$ is continuous. It suffices to prove that for each point $\e\in \vb \Gamma$ and every sequence $\gamma_n\in \Gamma$ converging to $\e$, the sequence $(\gamma_n)$ in $G$ flag-converges to $\xi(\e)$. This, however, is the content of \Cref{lem:image} and \Cref{prop:vii_map}. 

In order to prove that $\xi(\vb \G)\subset A\cup B$, we observe that $\vi$ is dense in $\geo \G$ and that 
$\xi(\vi)\subset A\cup B$ (see the second part of \Cref{lem:image}). 
Thus, continuity of $\xi$ and the fact that $A\cup B$ is closed, implies that $\xi(\vb \G)\subset A\cup B$.

\section{Conclusion of the proof of Theorem \ref*{thm:main}}\label{sec:mainproof} 

Recall that a $\tmod$-regular subgroup $\G$ of $G$, which is word-hyperbolic as an abstract group, is $\tmod$-asymptotically embedded (see \Cref{def:AE}) if there exists an equivariant antipodal homeomorphism $f: \vb \Gamma\to \Lt$, the flag-limit set of $\G$ in $\Ft$, which extends continuously the orbit map \[o_x: \Gamma \to \Gamma x\subset X\] to the symmetric space $X$, where continuity is understood with respect to the topology of flag-convergence. The equivariant map $f$ is said to be the {\em asymptotic embedding} of $\G$. 

\begin{proposition}
Under the assumption of \Cref{thm:main}, the subgroup $\G$ of $G$ generated by $\G_A$ and $\G_B$ is $\tmod$-asymptotically embedded with the asymptotic embedding given by the map $\xi$ whose image lies in $A\cup B$. 
\end{proposition}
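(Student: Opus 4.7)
The approach is to verify the definition of $\tmod$-asymptotic embedding by assembling the material of Sections 3 and 4. As noted at the start of Section 4, $\G\cong\G_A*\G_B$ is word-hyperbolic (being a free product of two word-hyperbolic Anosov subgroups), and by \autoref{thm:regularity} it is $\tmod$-regular. The map $\xi:\vb\G\to A\cup B\subset\Ft$ constructed in Section 4 is $\G$-equivariant (\autoref{cor:bd_map}), antipodal and hence injective (\autoref{prop:antipodal}), and a topological embedding by the argument in Section 4.3 (continuous, with compact Hausdorff domain). What remains is to identify $\xi(\vb\G)$ with $\Lt(\G)$, to verify that $\xi$ continuously extends the orbit map $o_x:\G\to\G x\subset X$ with respect to flag-convergence, and to recover the inclusion $\Lt(\G)\subset A\cup B$ asserted in (iii) of the \mainthm.

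The single key observation, underlying each of the remaining points, is that for every $\e\in\vb\G$ and every sequence $\g_n\in\G$ with $\g_n\raC\e$ one has $\g_n\raf\xi(\e)$. For $\e\in\vii$ this is exactly \autoref{prop:vii_map}. For $\e\in\vi$, writing $\e=\g\e'$ with $\e'\in\vb\G_A\sqcup\vb\G_B$ and $\g_n=\g\alpha_n$ with $\alpha_n\raC\e'$ (eventually in the factor containing $\e'$), the claim reduces to the asymptotic embeddedness of the factor $\G_A$ or $\G_B$ and to the fact that flag convergence is $G$-equivariant (as recorded in Section 2.5).

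This observation immediately yields the continuous extension of the orbit map: if $\g_n\to\e$ in the Cayley compactification of $\G$, then $\g_n x\raf\xi(\e)$ in $\bar X^\tmod$. It also yields both inclusions defining $\xi(\vb\G)=\Lt(\G)$. The inclusion $\xi(\vb\G)\subset\Lt(\G)$ is immediate by picking any $\g_n\raC\e$. For the reverse, given $\tau\in\Lt(\G)$ with $\g_n\raf\tau$, pass to a subsequence converging in the compact Cayley compactification $\G\cup\vb\G$ to some $\e\in\vb\G$; the observation gives $\g_n\raf\xi(\e)$, and uniqueness of $\tmod$-flag limits for $\tmod$-regular sequences (see \autoref{prop:cont_reg} together with \autoref{foot:regular}) forces $\tau=\xi(\e)$. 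The antipodality of $\Lt(\G)$ then follows from that of $\xi$ (\autoref{prop:antipodal}), and the inclusion $\Lt(\G)=\xi(\vb\G)\subset A\cup B$ is part of \autoref{lem:image} combined with Section 4.3. Therefore $\xi$ realizes $\G$ as a $\tmod$-asymptotically embedded subgroup of $G$. No substantive obstacle remains; the step is a synthesis of results already proved in Sections 3 and 4, with the only small point to be explicit about being the uniqueness of flag limits to deduce $\Lt(\G)\subset\xi(\vb\G)$.
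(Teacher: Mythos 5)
Your proposal is correct and follows essentially the same route as the paper: it assembles $\tmod$-regularity from \autoref{thm:regularity}, the equivariance, antipodality, and continuity of $\xi$ from Section 4, and then obtains $\xi(\vb\G)=\Lt(\G)$ by subconvergence in the Cayley compactification together with uniqueness of flag limits. Your explicit treatment of the case $\e\in\vi$ (reducing to asymptotic embeddedness of the factors) and your appeal to uniqueness of flag limits merely make precise details that the paper's Section 4.3 and proof leave implicit.
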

\proof We already know that the subgroup $\G< G$ is $\tmod$-regular. See \Cref{thm:regularity}.

The discussion in  \Cref{sec:embedding} shows that  
$\xi: \vb \G \to \Ft$ is a continuous antipodal map whose image is contained in the flag-limit set of $\G$ in $\Ft$ and which continuously extends the orbit map $o_x$. The fact that the map $\xi$ is onto $\Lt$ follows from the fact that every sequence in $\G$ subconverges to a point 
$\e\in \vb \G$. Let $(\gamma_n)$ denote that convergent subsequence. Then, as we noted above, $o_x(\gamma_n)$ converges   to $\xi(\e)$, i.e. the flag-limit of the sequence $\gamma_n\in \Gamma< G$ is $\xi(\e)\in \xi(\vb \G)$. \qed  

\medskip
This concludes the proof of the \Cref{thm:main}.

\section{Consequences of Theorem \ref*{thm:main}}\label{sec:consequences}

Below we present some  applications of the main theorem of the paper. 

\subsection{Schottky subgroups}

A subgroup $\G$ of $G$ is said to be a  {\em $\tmod$-Schottky} subgroup if it is free and $\tmod$-Anosov. 
Starting with a subset of pairwise antipodal points $\{a_\pm,b_\pm \}$ in $\Ft$, let $\alpha,\beta\in G$ be {\em axial} isometries of $X$ which preserves some $\tmod$-regular geodesic lines $L_{a_\pm}, L_{b_\pm} \subset X$, respectively, such that $L_{a_\pm}$ (resp. $L_{b_\pm}$) is forward/backward asymptotic to $a_\pm$ (resp. $b_\pm$).
It was shown in \cite[Section 7.6]{morse} that, after passing to sufficiently large  powers of $\alpha$ and $\beta$, 
the subgroup $\inp*{\alpha^m}{\beta^n}$, $m,n\gg 1$, is a $\tmod$-Schottky subgroup of $G$, and is naturally isomorphic to the two-generator free group. Below, 
as a consequence of \Cref{thm:main}, we show that under suitable conditions, $\inp*{\alpha}{\beta^n}$, $n\gg 1$, is a $\tmod$-Schottky subgroup of $G$, i.e., under these conditions, we  need to take a large power of only {\em one} of the generators.
In particular, this construction produces a large family of  2-generated $\tmod$-Schottky subgroups such that one of the generators 
can be assumed to have 
arbitrarily small {\em translation length} in $X$.
We remark that the constructions of such groups do not follow from the results of
\cite{morse} or \cite{MR4002289}.

Let $\{a_\pm,b_\pm\}$, $\alpha$, and $\beta$ be as above. We will assume that there exists a  
one-parameter closed subgroup $H_\alpha$  of $G$ containing the element $\alpha$ and such that all elements of $H_\alpha$ preserve the geodesic  $L_{a_\pm}$. (This holds, for instance, if $\alpha$ is a {\em transvection} along $L_{a_\pm}$.) 

\begin{lemma}\label{lem:generic_iso}
 All but finitely many elements of $\hat\alpha\in H_\alpha$ satisfy the following: The subset $\{b_+,b_-\}$ is antipodal relative to $\hat\alpha\{b_+,b_-\}$ in $\Ft$.
\end{lemma}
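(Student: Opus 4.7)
The plan is to reduce the statement to the fact that a bounded, closed, real-analytic proper subset of $\mathbb{R}$ is necessarily finite. Since $\alpha \in H_\alpha$ is axial (hence of infinite order) and $H_\alpha$ is a closed connected one-parameter subgroup of $G$, we may identify $H_\alpha \cong \mathbb{R}$ via $t \mapsto \hat\alpha_t = \exp(tX)$ for some $X$ in the Lie algebra of $G$ with $\alpha = \hat\alpha_{t_0}$ for some $t_0 \neq 0$. Since $H_\alpha$ is connected and preserves the two-element set of flag endpoints $\{a_+,a_-\}$ of $L_{a_\pm}$, it must fix each of $a_+$ and $a_-$.

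For each pair $(c_1,c_2) \in \{b_+,b_-\} \times \{b_+,b_-\}$, set
\[
  F_{c_1,c_2} \coloneqq \{ t \in \mathbb{R} : c_1 \text{ is not antipodal to } \hat\alpha_t(c_2) \} = \phi_{c_2}^{-1}(E(c_1)),
\]
where $\phi_{c_2}: \mathbb{R} \to G/P_\tmod$, $t\mapsto \hat\alpha_t(c_2)$, is the orbit map. Since $E(c_1)\subset G/P_\tmod$ is a proper real-algebraic subvariety, and $\phi_{c_2}$ is real-analytic, $F_{c_1,c_2}$ is a closed real-analytic subset of $\mathbb{R}$. The lemma will follow once we show each $F_{c_1,c_2}$ is finite: the bad set in the statement is the union of the four sets $F_{c_1,c_2}$.

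The main step is the boundedness of $F_{c_1,c_2}$, which I would derive from the attracting/repelling dynamics of $\alpha$ on $G/P_\tmod$. Since $\alpha$ is $\tmod$-regular with attracting fixed point $a_+$ and repelling fixed point $a_-$, \autoref{prop:cont_reg} gives $\alpha^n|_{C(a_-)} \to a_+$ uniformly on compacts as $n \to \infty$, and the analogous statement for $\alpha^{-n}$. Given $c_2 \in \{b_+,b_-\} \subset C(a_-)\cap C(a_+)$ (by the pairwise antipodality of $\{a_\pm,b_\pm\}$), write $t = n + s$ with $n \in \mathbb{Z}$, $s\in[0,1]$, so that
\[
  \hat\alpha_t(c_2) = \hat\alpha_s(\alpha^n(c_2)).
\]
Since $\alpha^n(c_2) \to a_+$ as $n \to +\infty$, $\hat\alpha_s$ fixes $a_+$, and the $G$-action on $G/P_\tmod$ is continuous, a standard subsequential extraction argument gives $\phi_{c_2}(t) \to a_+$ as $t \to +\infty$; symmetrically, $\phi_{c_2}(t) \to a_-$ as $t\to -\infty$. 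Because $c_1 \in \{b_+,b_-\}$ is antipodal to both $a_+$ and $a_-$, and $C(c_1) = G/P_\tmod \setminus E(c_1)$ is open, $\phi_{c_2}(t) \notin E(c_1)$ for $|t|$ sufficiently large; hence $F_{c_1,c_2}$ is bounded.

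Finally, I would invoke the connectedness of $\mathbb{R}$ together with real-analyticity: a closed real-analytic subset of $\mathbb{R}$ is either all of $\mathbb{R}$ or a discrete closed subset. The argument above shows $F_{c_1,c_2}$ is a proper subset, so it is discrete; being also bounded, it is finite. Taking the union over the four choices of $(c_1,c_2)$ completes the proof. The key content is really the dynamical step giving $\phi_{c_2}(t) \to a_\pm$ as $t\to\pm\infty$, which promotes the discrete statement (for integer powers of $\alpha$) to the continuous statement required for $H_\alpha$; everything else is a packaging of general facts about exceptional subvarieties and real-analytic subsets of $\mathbb{R}$.
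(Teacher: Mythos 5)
Your proposal is correct and follows essentially the same route as the paper: identify $H_\alpha$ with $\R$, observe that the locus of ``bad'' parameters is a real-analytic subset of $\R$ (the paper packages your four sets $F_{c_1,c_2}$ into a single preimage $E_C(b_+,b_-)$ of an analytic subset $C$ of $G/P_\tmod\times G/P_\tmod$, which is only a cosmetic difference), show it is bounded because $\hat\alpha_t(b_\pm)\to a_\pm$ as $t\to\pm\infty$ lands in the antipodal set of $\{b_+,b_-\}$, and conclude finiteness. Your interpolation argument (writing $t=n+s$ and combining the contraction dynamics of $\alpha^n$ with the compact family $\{\hat\alpha_s\}_{s\in[0,1]}$ fixing $a_+$) just supplies the details of the limit $\lim_{r\to\pm\infty} r\cdot(b_+,b_-)=(a_\pm,a_\pm)$ that the paper asserts without proof.
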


\begin{proof}
 Let $f: \R \ra H_\alpha< G$ be the isomorphism normalized by $f(1) = \alpha$. 
 Under this identification, we have an analytic action $\R \acts \Ft^2$ given by $r\cdot (\tau_+,\tau_-) = (f(r)\tau_+,f(r)\tau_-)$.
 In particular, if $C \subset\Ft^2$ is an analytic subset and $(\tau_+,\tau_-)$ is an arbitrary point, then $E_C(\tau_+,\tau_-) \coloneqq \{ r\in\R \mid r\cdot (\tau_+,\tau_-) \in C \}$ is an analytic subset of $\R$.
 
 Recall the notion of exceptional subvariety $E(\tau) \subset \Ft$ for $\tau$ from \Cref{sec:basics}.
 Setting 
 \[
 C = (E(b_\pm) \times \Ft) \cup (\Ft \times E(b_\pm)),
 \]
 we get that $E_C(b_+,b_-)$ is an analytic subset of $\R$.
 Furthermore, $(\tau_+,\tau_-)\in \Ft^2$ 
 lies in the complement of $C$ if and only if $\{\tau_+,\tau_-\}$ is antipodal relative to $\{b_+,b_-\}$.
 Since 
 $$
 \lim_{r\ra\pm\infty} r\cdot (b_+,b_-) = (a_\pm,a_\pm)
 $$
  and $\{a_+,a_-\}$ is antipodal relative to $\{b_+,b_-\}$, $E_C(b_+,b_-)$ must be a compact subset of $\R$.
 Hence, $E_C(b_+,b_-)$ is finite.
\end{proof}
 
Now we return to the construction of $\tmod$-Schottky subgroups of $G$.
 Suppose  that the axial isometries $\alpha$ and $\beta$ of $X$ are chosen so that:
\begin{enumerate}\itemsep0em
\item As before, the forward/backward asymptotic points of $L_\alpha$ and $L_\beta$  in $\Ft$ are pairwise antipodal points $a_{\pm}$ and $b_\pm$, where $L_\alpha$ (resp. $L_\beta$) is a $\tmod$-regular line in $X$ preserved by $\alpha$ (resp. $\beta$). 
 
\item  For all $k\in\Z \setminus \{0\}$, $\alpha^k \{b_+,b_-\}$ is antipodal relative to $\{b_+,b_-\}$. 
\end{enumerate}

Note that \Cref{lem:generic_iso} guarantees that all but at most countably many elements of $H_\alpha$ satisfy the second condition above.
In particular, one can choose $\alpha$ to have an arbitrarily small translation length in $X$.

We have:

\begin{proposition}[Schottky subgroups]
 For all large $n$, the subgroup $\inp*{\alpha}{\beta^n}$ of $G$ is $\tmod$-Anosov, and is naturally isomorphic to the free group of rank two.
\end{proposition}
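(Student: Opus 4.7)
The plan is to verify the hypotheses of the \mainthm\ for the cyclic subgroups $\G_A := \langle\alpha\rangle$ and $\G_B := \langle\beta^n\rangle$. Both of these subgroups are $\tmod$-Anosov, since each is generated by a single axial isometry with $\tmod$-regular axis (the rank-one case of the Schottky construction from \cite[Section 7.6]{morse}). Hence the entire task reduces to producing a ping-pong pair $(A,B)$ in $\Ft$ antipodal relative to itself.

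First I would introduce the compact candidate
\[
 A_0 := \{a_+, a_-\} \cup \bigcup_{k \in \Z \setminus \{0\}} \alpha^k\{b_+, b_-\}, \qquad B_0 := \{b_+, b_-\}.
\]
Applying \autoref{prop:cont_reg} to the $\tmod$-regular sequence $(\alpha^k)_{k\in\Z}$, we have $\alpha^k|_{C(a_\mp)} \to a_\pm$ uniformly on compacta as $k \to \pm\infty$, so the orbit $\{\alpha^k b_\pm : k \ne 0\}$ accumulates only at $\{a_+, a_-\}$, making $A_0$ compact. Antipodality of $A_0$ relative to $B_0$ is precisely the content of the initial assumption (pairwise antipodality of $\{a_\pm, b_\pm\}$) together with hypothesis (2). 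Because antipodality is an open condition in $\Ft \times \Ft$ (the complement of a closed exceptional subvariety), I can then pick compact neighborhoods $A \supset A_0$ and $B \supset B_0$ with nonempty interiors that are disjoint and antipodal relative to each other; in particular, $B \subset C(a_+) \cap C(a_-)$ and $A \subset C(b_+) \cap C(b_-)$.

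Next I would verify the two ping-pong containments. For $\alpha^k(B) \subset \Int A$ with $k \ne 0$: the uniform convergence above produces $N$ such that for all $|k| \ge N$, $\alpha^k(B)$ is contained in a preassigned small neighborhood of $\{a_+, a_-\}$ inside $\Int A$; for the finitely many nonzero $k$ with $|k| < N$, the points $\alpha^k b_\pm$ lie in $A_0 \subset \Int A$, so uniform continuity of the finite family $\{\alpha^k : |k| < N\}$ secures $\alpha^k(B) \subset \Int A$ after shrinking $B$ around $B_0$. For $\beta^{nk}(A) \subset \Int B$ with $k \ne 0$: since $A$ is a compact subset of $C(b_+) \cap C(b_-)$, \autoref{prop:cont_reg} applied to $(\beta^m)$ yields $M$ with $\beta^m(A) \subset \Int B$ for all $|m| \ge M$. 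Choosing $n \ge M$ makes $|nk| \ge M$ for every $k \ne 0$, completing the verification. The \mainthm\ then gives that $\inp*{\alpha}{\beta^n} \cong \langle\alpha\rangle * \langle\beta^n\rangle \cong F_2$ is $\tmod$-Anosov.

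The main conceptual difficulty is that a single compact set $A$ must simultaneously absorb every image $\alpha^k(B)$ ($k \ne 0$) while remaining antipodal to a neighborhood of $\{b_+, b_-\}$. This is resolved by the interplay of two facts: the convergence $\alpha^k b_\pm \to a_\pm$ makes the orbit closure compact, and condition (2) is precisely what makes this closure antipodal to $B_0$. The asymmetric roles of the generators in the hypothesis (only $\beta$ needs to be raised to a large power) are reflected in the construction: $A$ is tailored to the full $\alpha$-orbit of $B_0$ from the outset, whereas $B$ need only absorb $\beta^{nk}(A)$, and the latter can be arranged by taking $n$ sufficiently large.
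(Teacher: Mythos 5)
Your proposal is correct and follows essentially the same route as the paper: the same candidate sets ($A_0,B_0$ are the paper's $A_1,B_1$), the same enlargement to antipodal compact neighborhoods (the paper cites \cite[Lemma 4.24]{MR4002289} where you invoke openness of antipodality), the same contraction argument for large $|k|$ plus shrinking of $B$ to handle the finitely many remaining powers of $\alpha$, and the same choice of $n$ large so that $\beta^{nk}(A)\subset\Int B$. Your write-up is in fact slightly more explicit than the paper's about why shrinking $B$ suffices and about the order in which the sets are fixed.
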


\begin{proof}
 We observe that $A_1 = \{ \alpha^k b_\pm \mid k\in \Z\setminus \{0\}\} \cup \{ a_\pm \}$ and $B_1 = \{b_\pm\}$ are compact sets, which are antipodal relative to each other (by the condition 2 above). Moreover, by our choice of $A_1$, $\alpha^k(B)\subset A$ for all $k\ne 0$. 
 There exist subsets $A$ and $B$ of $\Ft$ such that $A$ and $B$ are antipodal relative to each other, and $\Int A \supset A_1$, $\Int B \supset B_1$, see \cite[Lemma 4.24]{MR4002289}.
 Moreover, for all large $k\in \N$, $\alpha^{\pm k} B \subset \Int A$.
 Making $B$ smaller we can assure that $\alpha^k B \subset \Int A$, for all $k\in\Z \setminus \{0\}$.
 
 On the other hand,
 $$
 \lim_{k\to\pm\infty} \alpha^k(A)=\{b_\pm\}. 
 $$
 Hence, for all sufficiently large $n\in\N$, $\beta^{\pm n} A \subset \Int B$. 
 Applying the \Cref{thm:main} to $\G_A = \< \alpha\>$, $\G_B = \<\beta^n\>$, $n\gg1$, and the subsets $A$ and $B$ as above, we finish the proof.
\end{proof}

\subsection{A general version of the \Cref*{thm:main} involving several Anosov subgroups}
Our second application of the main theorem is its generalization to the case of several Anosov subgroups:

\begin{corollary}\label{cor:several_anosov}
Suppose that the subgroups $\Gamma_1,\dots,\Gamma_n$ of $G$ are $\tmod$-Anosov, $A_1,\dots,A_n\subset \Ft$ are pairwise antipodal compact subsets with nonempty interiors such that for every nontrivial element $\gamma_i\in \Gamma_i$,
\begin{equation}
 \label{eqn:ping-pong-several}
\gamma_i(A_1 \cup \dots \cup A_n \setminus A_i) \subset \Int A_i, 
\end{equation}
$i=1,\dots,n$. Then:

 \begin{enumerate}[label=(\roman*)]\itemsep0em
 \item The subgroup $\G< G$ generated by $\G_1,\dots,\G_n$ is naturally isomorphic to the  free product $\G_1 * \G_2 * \cdots *\G_n$. 
 
 \item The subgroup $\G< G$ is $\tmod$-Anosov. 
 
 \item The $\tmod$-limit set of $\G$ is contained in $A_1\cup \dots \cup A_n$.
 \end{enumerate}
\end{corollary}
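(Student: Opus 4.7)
The plan is to argue by induction on $n$, with the base case $n=2$ being the \mainthm{} itself. For the inductive step, assume the corollary holds for $n-1$ Anosov subgroups. Given $\Gamma_1,\dots,\Gamma_n$ and $A_1,\dots,A_n$ satisfying the hypotheses, set $A' \coloneqq A_1 \cup \cdots \cup A_{n-1}$, $B \coloneqq A_n$, and $\G' \coloneqq \<\Gamma_1, \dots, \Gamma_{n-1}\> < G$. The strategy is to realize $\G$ as a Klein combination of $\G'$ and $\G_n$ and invoke the \mainthm.

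First, I apply the inductive hypothesis to $\G_1,\dots,\G_{n-1}$ together with the sets $A_1,\dots,A_{n-1}$: these sets remain pairwise antipodal compact subsets with nonempty interiors, and the restricted ping-pong condition $\gamma_i(A_1 \cup \dots \cup A_{n-1} \setminus A_i) \subset \Int A_i$ for $\gamma_i \in \Gamma_i\setminus\{1\}$ ($i \le n-1$) is weaker than \eqref{eqn:ping-pong-several}, hence holds. The induction yields that $\G' \cong \G_1 * \cdots * \G_{n-1}$ is $\tmod$-Anosov with $\Lt(\G') \subset A'$.

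Next, I verify the hypotheses of the \mainthm{} for the pair $(\G', \G_n)$ with subsets $(A', B)$. The sets $A'$ and $B$ are compact with nonempty interiors (since $\emptyset \ne \Int A_1 \subset \Int A'$), and they are antipodal relative to each other by the pairwise antipodality of $A_1,\dots,A_n$; in particular, $A' \cap B = \emptyset$ since antipodal points are distinct. The inclusion $\gamma_n(A') \subset \Int B$ for nontrivial $\gamma_n \in \G_n$ is precisely \eqref{eqn:ping-pong-several} with $i=n$. For the reverse inclusion $\gamma'(B) \subset \Int A'$ with nontrivial $\gamma' \in \G'$, I perform an auxiliary induction on the relative length of $\gamma'$, written as a reduced word $\alpha_k\cdots\alpha_1$ in $\G_1 * \cdots * \G_{n-1}$ (with consecutive letters in distinct factors). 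For $k=1$, if $\alpha_1 \in \G_{i_1}$ then $A_n \subset A_1 \cup \dots \cup A_n \setminus A_{i_1}$, so \eqref{eqn:ping-pong-several} gives $\alpha_1(A_n) \subset \Int A_{i_1} \subset \Int A'$. Inductively, if $\alpha_{k-1} \cdots \alpha_1(A_n) \subset \Int A_{i_{k-1}}$ and $\alpha_k \in \G_{i_k}$ with $i_k \ne i_{k-1}$, then $A_{i_{k-1}} \subset A_1 \cup \dots \cup A_n \setminus A_{i_k}$, so \eqref{eqn:ping-pong-several} yields $\alpha_k \cdots \alpha_1(A_n) \subset \alpha_k(\Int A_{i_{k-1}}) \subset \Int A_{i_k} \subset \Int A'$.

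Finally, the \mainthm{} applied to $(\G', \G_n)$ gives that $\G = \<\G', \G_n\>$ is naturally isomorphic to $\G' * \G_n \cong \G_1 * \cdots * \G_n$ by associativity of the free product, is $\tmod$-Anosov, and has $\tmod$-limit set contained in $A' \cup B = A_1 \cup \cdots \cup A_n$, which establishes all three conclusions. I do not anticipate any substantial obstacle: the only mildly technical step is the nested ping-pong verification for $\G'$, which is a straightforward iteration of \eqref{eqn:ping-pong-several}.
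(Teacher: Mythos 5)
Your proof is correct and follows essentially the same route as the paper: induction on $n$, peeling off the last factor and applying the \mainthm{} to $\G'=\langle\G_1,\dots,\G_{n-1}\rangle$ and $\G_n$ with the sets $A'=A_1\cup\dots\cup A_{n-1}$ and $B=A_n$. The only difference is cosmetic: the paper carries the ping-pong condition $\alpha(B)\subset\Int A$ as part of its inductive hypothesis and leaves the relative-length induction to the reader, whereas you state the inductive hypothesis as the bare corollary for $n-1$ factors and then write out that relative-length induction explicitly, which is a welcome completion of the detail the paper omits.
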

\proof The proof is by induction on $n$. Set $\Gamma_A:=\< \G_1,\dots,\G_{k-1}\>$, $\Gamma_B=\Gamma_k$,
$$
A:= A_1\cup \dots\cup A_{k-1}, \quad B:= A_k, \quad k\le n.  
$$
We assume that $\Gamma_A$ is $\tmod$-Anosov, naturally isomorphic to $\G_1 * \G_2 * \dots *\G_{k-1}$,  and that each nontrivial element $\alpha\in \Gamma_A$ satisfies 
$$
\alpha(B)\subset \Int A
$$
(it is clear that every nontrivial element $\beta\in \G_B$ satisfies $\beta(A)\subset \Int B)$.  Then \Cref{thm:main} implies that 
$\G^k:=\<\G_A, \G_B\>$ is naturally isomorphic to the free product   $\G_A * \G_B\cong \G_1 * \G_2 * \dots *\G_k$ and is $\tmod$-Anosov. In order to prove that each nontrivial element $\gamma\in \G^k$ satisfies $\gamma(A_{j})\subset A\cup B$, $j=k+1,\dots,n$, one argues by induction on the relative length of $\gamma\in \G_A * \G_B$, similarly to the proof of  \Cref{lem:image}. We leave the details to the reader. 
 \qed 
 
\begin{remark}
 We show that the main result (Theorem 1.3) of our paper with Bernhard Leeb, \cite{MR4002289}, follows from the \Cref{cor:several_anosov}: If $\G_1,\dots,\G_n$ are {\em pairwise antipodal},\footnote{That is, the $\tmod$-limit sets of $\G_i$ and $\G_j$, for all $i\ne j$, are antipodal relative to each other.} residually finite, $\tmod$-Anosov subgroups, then consider compact neighborhoods $A_1,\dots, A_n$ of the limit sets of $\G_1,\dots,\G_n$, respectively, such that $A_i$ and $A_j$, for all $i\ne j$, are pairwise antipodal (cf. \cite[Lemma 4.24]{MR4002289}).
 For each $i$, the subset of $\G_i$ not satisfying \eqref{eqn:ping-pong-several} is  finite. This can be seen as follows: Suppose, to the contrary, that there exists an infinite sequence $(\g_k)$ of distinct elements in $\G_i$ such that, for all $k\in\N$, $\gamma_k(A_1 \cup \dots \cup A_n \setminus A_i) \not\subset \Int A_i$.
 In particular, the Hausdorff distance (with respect to a metric on $\Ft$ compatible with the manifold topology) between $\gamma_k(A_1 \cup \dots \cup A_n \setminus A_i)$ and any point in $\Lt(\G_i)$ is uniformly bounded below by a positive number. However, passing to a subsequence, we can ensure that $\g_k \raf \tau_+ \in\Lt(\G_i)\subset\Int A_i$.
 Hence, $\gamma_k(A_1 \cup \dots \cup A_n \setminus A_i) \ra \tau_+$, a contradiction.
 
 Thus, by the residual finiteness assumption, for each $i$, we pass to a finite index subgroup $\G_i'<\G_i$ such that \eqref{eqn:ping-pong-several} is satisfied for the collections $\G_1',\dots,\G_n'$ and $A_1,\dots, A_n$.
 Applying \Cref{cor:several_anosov} to these collections, we obtain that the subgroup $\inp*{\G_1',\dots}{\G_n'}$ of $G$ is $\tmod$-Anosov and naturally isomorphic to the abstract free product $\G_1'*\dots*\G_n'$, which is  the conclusion of \cite[Theorem 1.3]{MR4002289}.
\end{remark}

\subsection{Another example of a Schottky group}

In this section we give an example (claimed in the introduction) of a Schottky group failing the {\em uniform contraction} property but otherwise satisfying the assumptions of the Combination Theorem.  

 \begin{example}\label{ex1}
 Consider $G= \PSL(2, \R)$, the group of orientation-preserving isometries of the hyperbolic plane (in the unit disk model), acting on the unit circle $S^1$. Take two pairwise disjoint closed round disks $B_\pm, A_\pm$ whose boundary circles have the same radii and meet $S^1$ orthogonally. Take $\alpha, \beta\in G$ such that $\alpha$ 
 sends $A_+$ to the complement (in the extended complex plane) of $A_-$ and $\beta$  sends $B_+$ to the complement of $B_-$. See Figure \ref{fig:example}.  The elements $\alpha, \beta$ can be chosen so that their restrictions to the boundary circles of $A_-, B_+$ respectively are isometries of the angular metrics on the respective circles. For instance, one can take $\alpha$ (respectively, $\beta$) the composition of the inversion in $\partial A_-$ (respectively, $\partial B_-$) 
 and a reflection in a line passing through the center of the unit disk.  Then the subgroups $\Gamma_A=\langle \alpha\rangle, \Gamma_B=\langle \beta \rangle$ and  the subsets $A=A_-\cup A_+$, $B=B_-\cup B_+$ satisfy  the assumptions of the Combination Theorem in the extended complex plane. It remains to modify $A, B$ to new subsets $A', B'$, still satisfying the assumptions of the Combination Theorem (for the $G$-action on $S^1$), so that $\beta$ is no longer is a strict expansion on $B'$. We let $c$ denote the (compact) arc on $S^1$ connecting a point of $\partial A_-$ to a point of $\partial B_-$ and, otherwise, disjoint from $A\cup B$. (Such an arc always exists if we are willing to replace $\alpha$ with $\alpha^{-1}$.) See again Figure \ref{fig:example}. Now set $A':= (A_+\cap S^1) \cup (A_-\cap S^1) \cup c$. We also replace $B_-\cap S^1$ with a slightly smaller compact subarc $b_-\subset B_-\cap S^1$ and $B_+\cap S^1$ with a slightly larger compact subarc $b_+\subset B_+\cap S^1$, so that 
 $$
 \beta^{\pm 1}(A')\subset b_+ \cup b_-.  
 $$
 Lastly, set $B'=b_- \cup b_+$. Then the quadruple $\Gamma_A, \Gamma_B, A', B'$ satisfies the assumptions of the Combination Theorem, but $\beta$ is not expanding on $B$, specifically, the expansion fails at the intersection point $x$ of $c$ and the circle $\partial B_-$. 
 \end{example}

\begin{figure}[h]
\centering
\begin{overpic}[scale=.6,tics=5]{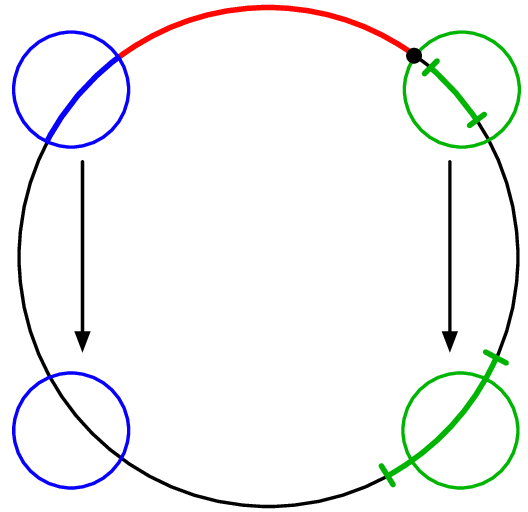}
\put(-3,90){\color{blue}$A_-$}
\put(-1,0){\color{blue}$A_+$}
\put(97,0){\color{ForestGreen}$B_+$}
\put(97,90){\color{ForestGreen}$B_-$}
\put(49,88){\color{red}$c$}
\put(18,46){$\alpha$}
\put(78,46){$\beta$}
\put(76,89){$x$}
\end{overpic}
\caption{}\label{fig:example}
\end{figure}

\bibliographystyle{alpha}

\vspace{.3in}

\noindent
S.D.

Department of Mathematics,
   Yale University, 
   219 Prospect St, New Haven, CT 06511

\texttt{subhadip.dey@yale.edu}

\medskip
\noindent
M.K.

{Department of Mathematics,
   University of California, Davis,
   One Shields Ave, Davis, CA 95616}

\texttt{kapovich@ucdavis.edu}

\end{document}